\theoremstyle{plain}
\newtheorem{thm}{Theorem}
\newtheorem{lem}[thm]{Lemma}
\newtheorem{prop}[thm]{Proposition}
\theoremstyle{definition}
\newtheorem*{ack}{Acknowledgment}
\theoremstyle{remark}
\renewcommand{\Im}{\operatorname{Im}}
\newcommand{\Hom}{\operatorname{Hom}}
\newcommand{\End}{\operatorname{End}}
\newcommand{\Aut}{\operatorname{Aut}}
\newcommand{\Ind}{\operatorname{Ind}}
\newcommand{\Res}{\operatorname{Res}}
\newcommand{\sS}{\mathcal{S}}
\newcommand{\edim}{\mathfrak{d}}
\newcommand{\Spec}{\operatorname{Spec}}
\newcommand{\Id}{\operatorname{Id}}
\newcommand{\rank}{\operatorname{rank}}
\newcommand{\diag}{\operatorname{diag}}
\newcommand{\tr}{\operatorname{Tr}}
\newcommand{\trace}{\operatorname{trace}}
\newcommand{\ch}{\operatorname{char}}
\newcommand{\onto}{\twoheadrightarrow}
\newcommand{\into}{\hookrightarrow}
\newcommand{\tto}{\longrightarrow}
\newcommand{\iso}{\stackrel{\sim}{\tto}}
\newcommand{\til}[1]{\widetilde{#1}}
\newcommand{\Mat}{\operatorname{M}}
\DeclareMathOperator{\rad}{rad}
\DeclareMathOperator{\soc}{soc}
\renewcommand{\phi}{\varphi}
\newcommand{\bl}{\boldsymbol{l}}
\renewcommand{\d}{\delta}
\newcommand{\fp}{\mathfrak{p}}
\newcommand{\e}{\varepsilon}
\renewcommand{\k}{\mathbb{k}}
\newcommand{\ZZ}{\mathbb{Z}}
\newcommand{\cat}[1]{\operatorname{\mathsf{#1}}}
\newcommand{\Mod}[1]{{#1}\text{-}\!\cat{Mod}}
\newcommand{\proj}[1]{{#1}\text{-}\!\cat{proj}}
\newcommand{\fgmod}[1]{{#1}\text{-}\!\cat{mod}}
\newcommand{\FPmod}[1]{{#1}\text{-}\!\cat{FPmod}_\infty}
\newcommand{\cen}{\mathcal{Z}}
\newcommand{\dC}{\delta_\text{\rm C}}
\newcommand{\Hint}{\textstyle\int}
\newcommand{\upin}{\text{\ \rotatebox{90}{$\in$}\ }}
\renewcommand{\labelenumi}{(\alph{enumi})}
\begin{document}

\title[Projective modules]%
{Projective modules over Frobenius algebras and Hopf comodule algebras}

\author{Martin Lorenz}

\address{Department of Mathematics, Temple University,
    Philadelphia, PA 19122}

\email{lorenz@temple.edu}

\urladdr{http://www.math.temple.edu/$\stackrel{\sim}{\phantom{.}}$lorenz}

\author{Loretta FitzGerald Tokoly}

\address{Mathematics Division,
Howard Community College,
Columbia, MD 21044
}

\email{ltokoly@howardcc.edu}


\dedicatory{Dedicated to Mia Cohen on the occasion of her retirement}

\thanks{Research of the first author supported in part by NSA Grant H98230-09-1-0026}

\subjclass[2000]{16W30, 16W35, 16G10, 20G42}

\keywords{%
Frobenius algebra, Hopf algebra, Hopf Galois extension, projective module, comodule algebra,
Grothendieck group, character, (Hattori-Stallings) rank, Cartan map, Higman trace
}

\begin{abstract}
This note presents some results on projective modules and the
Grothendieck groups $K_0$ and $G_0$ for Frobenius algebras and for
certain Hopf Galois extensions. Our principal technical tools are
the Higman trace for Frobenius algebras and a product
formula for Hattori-Stallings ranks of projectives over 
Hopf Galois extensions.
\end{abstract}

\maketitle


\section*{Introduction}

The aim of this article is to generalize certain results from
\cite{mL97a} on projective modules and
the Grothendieck groups $K_0$ and $G_0$ for finite-dimensional Hopf algebras.
Here, we take a more ring theoretic approach and consider general Frobenius 
algebras and Hopf Galois extensions instead of finite-dimensional Hopf algebras. 
Moreover, for the most part, we work over a
commutative base ring rather than a field.

Section~\ref{S:ranks} serves to review some fairly standard material, notably the 
relationship between Hattori-Stallings ranks and ordinary characters of projective modules.
This relationship is stated in the context of the Grothendieck groups $G_0(A) = K_0(\fgmod{A})$ and 
$K_0(A) = K_0(\proj{A})$, where $\fgmod{A}$ denotes the category of all finitely generated left modules over the ring $A$ and $\proj{A}$ is the full subcategory consisting of all finitely generated 
projective left $A$-modules.
Bass \cite{hB76} and Brown \cite[IX.2]{kB82} are excellent background references for this section.

The core material of this article consists of Sections~\ref{S:Frobenius} and \ref{S:Hopf}
which are largely independent of each other. Section~\ref{S:Frobenius} deals with Frobenius
algebras $A$ over a commutative ring $\k$, the main theme being the \emph{Higman trace}
\[
\tau \colon A \tto A \ ;
\]
see \ref{SS:Higman}.
The main result of Section~\ref{S:Frobenius} is Theorem~\ref{T:Frobenius},
a generalization of \cite[Theorem 3.4]{mL97a}. It concerns the so-called \emph{Cartan map}
$c \colon K_0(A) \to G_0(A)$ coming from the inclusion $\proj{A} \into \fgmod{A}$.
Under the assumption that $\k$ is a splitting field for $A$, we show that the rank of the
$\k$-linear map
\[
c \otimes 1_\k \colon K_0(A) \otimes_{\ZZ} \k \tto G_0(A) \otimes_{\ZZ} \k 
\]
is identical to the rank of the Higman trace. 
With an additional technical hypothesis on the Higman trace, Theorem~\ref{T:Frobenius}
also states that invertibility
of $c \otimes 1_\k$ implies semisimplicity of $A$.

Section~\ref{S:Hopf} is based on some results from the second author's PhD thesis \cite{lT99}.
The main result of this section,
Theorem~\ref{T:Frob}, gives a condition on the possible ranks of finitely generated projectives over 
certain Hopf Galois extensions which generalizes  \cite[Theorem 2.3(b)]{mL97a}. 
The proof of Theorem~\ref{T:Frob} given here is different from the original one in \cite{lT99}, 
the essential new ingredient being a product formula for Hattori-Stallings ranks; see Lemma~\ref{L:product}.
This allowed for a more general version of the result.

\begin{ack}
The first author thanks the organizers of the conference on the occasion of Mia Cohen's retirement,
held at the Sde-Boker campus of Ben-Gurion University on May 24-27, 2010, for giving him the opportunity to
present some related results on ring theoretic methods in the representation theory of Hopf algebras to an expert audience.
\end{ack}


\section{Ranks and characters} \label{S:ranks}


\subsection{Hattori-Stallings ranks}  \label{SS:Hattori}

Let $A$ be any ring (associative with 1). We let $[A,A]$ denote the additive subgroup of $A$
that is generated by all Lie commutators $[x,y] = xy - yx$ with $x,y \in A$ and 
consider the canonical group epimorphism
\[
T \colon A \onto T(A) = A/[A,A]\ , \qquad a \mapsto T(a) = a + [A,A] \ .
\]
Now let $P$ be a finitely generated 
projective (left) $A$-module.
The \emph{trace map} is defined by
\begin{align*}
\tr_{P/A}: \End_A(P) \iso \Hom_A(P,A) &\otimes_A P
& &\tto &  T&(A) \\
&\upin &&& &\upin\\
f&\otimes v & &\longmapsto & T(&f(v))\ .
\end{align*}
If $\{(f_i, v_i)\}_1^n \subseteq\Hom_A(P,A)\times P$ are
dual bases for $P$, that is,  $v = \sum_i f_i(v)v_i$ holds for all $v\in P$,
then
\begin{equation} \label{E:Hattori}
\tr_{P/A}(\phi) = \sum_i T(f_i(\phi(v_i))) \qquad (\phi\in\End_A(P))\ .
\end{equation}
The \emph{Hattori-Stallings rank} of $P$ is defined by
\[
r(P) = r_A(P) = \tr_{P/A}(1_P) \in T(A)\ .
\]
Explicitly, writing $P \cong A^n e$ for some idempotent matrix
$e=(e_{i,j})\in M_n(A)$, we have 
\[
r(P) = \sum_i T(e_{i,i})\ .
\]
In particular,
if $P \cong A^n$ then $r_P = n\,T(1)$. 

Hattori-Stallings ranks are additive, that is, $r_{P \oplus Q} = r_P + r_Q$ holds for any 
two finitely generated projective $A$-modules $P$ and $Q$\,. Thus we obtain a group homomorphism
\[
r = r_A \colon K_0(A)\to T(A)\,,\qquad [P]\mapsto r(P)  \ .
\]

\subsubsection{Functoriality} \label{SSS:induction}
Given any ring homomorphism $f \colon A \to B$, the
canonical group homomorphisms $K_0(f) = \Ind_A^B \colon K_0(A) \to K_0(B)$, $[P] \mapsto [B \otimes_A P]$ (``induction''),
and $T(f) \colon T(A) \to T(B)$, $T(a) \mapsto T(f(a))$, fit into  a commutative diagram
\begin{equation} \label{E:Hattori2}
\xymatrix{%
K_0(A) \ar[r]^{K_0(f)} \ar[d]_{r_A} & K_0(B) \ar[d]^{r_B}\\
T(A) \ar[r]_{T(f)} & T(B)
} 
\end{equation}

\subsubsection{Commutative rings} \label{SSS:commutative}
For any commutative ring $A$, the Hattori-Stallings rank function $r \colon K_0(A) \to T(A) = A$
of Section~\ref{SS:Hattori} factors through the rank map
\begin{equation*}
\rank \colon K_0(A) \stackrel{}{\tto} H_0(A):= [\Spec A,\ZZ] \ ,
\end{equation*}
where $[\Spec A,\ZZ]$ denotes the collection of all continuous functions $\Spec A \to \ZZ$ with
$\ZZ$ carrying the discrete topology. 
For any $P$ in $\proj{A}$, the value of $\rank(P)$ on $\fp \in \Spec A$, denoted by $\rank_\fp(P)$,
is defined to be the ordinary rank of the free $A_\fp$-module $P_\fp = A_\fp \otimes_A P$:
\[
P_\fp \cong A_\fp^{\rank_\fp(P)}\ ;
\]
Any continuous function $f \colon \Spec A \to \ZZ$  has only finitely many values, 
because $\Spec A$ is quasi-compact. If $f$ has values $f_1,\dots,f_c$\,, say, then we can write
$A = \prod_{i=1}^c e_iA$ with orthogonal idempotents $e_i = e_i^2 \in A$
in such a way that the various $\Spec e_iA$ are exactly the fibres of $f$; 
see \cite[IX.3]{hB68} for all this.  Defining $H_0(A) \to A$ by $f \mapsto \sum f_ie_i$
it is easy to see that the Hattori-Stallings rank function factors as
\begin{equation*}
r \colon K_0(A) \stackrel{\rank }{\tto} H_0(A) \to A \ ;
\end{equation*}
see \cite[Chapter II]{cWxx}.

\subsubsection{Algebras} \label{SSS:sss}
If $A$ is an algebra over some commutative base ring $\k$ then $T(A)$ is a $\k$-module
and the homomorphism $r$ extends canonically to a $\k$-module map
\begin{equation} \label{E:rk}
r_\k  \colon K_0(A) \otimes_{\ZZ} \k \to T(A)\,,\qquad [P]  \otimes k \mapsto kr(P)  \ .
\end{equation}


\subsection{Characters}  \label{SS:characters}

Assume that $A$ is an algebra over some commutative base ring $\k$
and let $\fgmod{A}_\k$ denote the full subcategory
of $\fgmod{A}$ consisting of all $A$-modules that are finitely generated projective over $\k$.
The \emph{character} $\chi_V$ of a module  $V$ in $\fgmod{A}_\k$ is defined by
\[
\chi_V(a) = \tr_{V/\k}(a_V) \in \k \qquad (a \in A)\ ,
\]
where $a_V \in \End_\k(V)$ is given by $a_V(v) = av$\,. Thus,
\[
\chi_V \in T(A)^* \subseteq A^*\ ,
\]
where $\,.\,^* = \Hom_\k(\,.\,,\k)$ denotes the $\k$-linear dual and $T(A)^*$, the $\k$-module of all \emph{trace forms} on $A$, consists of the $\k$-linear forms $A \to \k$
that vanish on $[A,A]$. 
Following Swan 
\cite{rSeE70} we let $G_0^\k(A)$ denote the Grothendieck group of $\fgmod{A}_\k$\,.
Thus $G_0^\k(A)$ is the abelian group with generators $[V]$ for each module $V$ in $\fgmod{A}_\k$
and relations $[V] = [U] + [W]$ for each
exact sequence $0 \to U \to V \to W \to 0$ in $\fgmod{A}_\k$\,. 
Since we also have the relation $\chi_V = \chi_U + \chi_W$
in $T(A)^{*}$, we obtain a well-defined group homomorphism
\[
\chi \colon G_0^\k(A) \tto T(A)^{*} \ , \qquad [V] \mapsto \chi_V \ .
\]


\subsection{Characters of projectives}  \label{SS:projectives}

Assume that the algebra $A$ is finitely generated projective over $\k$\,.
Then each finitely generated projective $A$-module $P$ belongs to $\fgmod{A}_\k$\,,
and hence both $r(P) \in T(A)$ and $\chi_P \in T(A)^{*}$ are defined.
In fact, the Hattori-Stallings rank $r(P)$ determines the character $\chi_P$. For finite group algebras $A = \k G$ this was
spelled out explicitly by Hattori \cite{aH65}; see also \cite[5.8]{hB76}. The proposition below 
is taken from Bass \cite[4.7]{hB76}.

The inclusion 
$\proj{A} \into \fgmod{A}_\k$ gives rise to
a group homorphism 
\[
c^\k \colon K_0(A) \to G_0^\k(A) \ , \qquad [P] \mapsto [P] \ .
\]
If the base ring $\k$ is regular then the inclusion $\fgmod{A}_\k \into \fgmod{A}$ gives rise to
an isomorphism $G_0^\k(A) \iso G_0(A)$\,; see \cite[Theorem 1.2]{rSeE70}. Thus, identifying 
$G_0^\k(A)$ and $G_0(A)$ for regular $\k$, the map $c^\k$  becomes the ordinary \emph{Cartan map}
\[
c \colon K_0(A) \to G_0(A)  \ .
\]

A map $T(A) \to T(A)^{*}$ is obtained by sending $a \in A$ to the
linear form $b \mapsto \tr_{A/\k}(L_b \circ R_a)$ on $A$. Here, $R_a, L_b \in \End_\k(A)$ 
denote right and left multiplication by $a$ and  $b$, respectively. Note that if $a$ or
$b$ belongs to $[A,A]$ then $L_b \circ R_a \in [\End_\k(A),\End_\k(A)]$ and so
$\tr_{A/\k}(L_b \circ R_a) = 0$\,. Therefore, $\tr_{A/\k}(L_b \circ R_a)$ depends only on
$T(a)$ and $T(b)$ and we obtain a well-defined $\k$-linear map
\begin{equation} \label{E:projectives}
\,.\,^t \colon T(A) \to T(A)^{*}\ , \qquad T(a) \mapsto (b \mapsto  \tr_{A/\k}(L_b \circ R_a))\ .
\end{equation}

\begin{prop}[Bass \cite{hB76}] \label{P:projectives}
Let $A$ be a $\k$-algebra that is finitely generated projective over $\k$\,. Then the following
diagram commutes:
\[
\xymatrix{%
K_0(A) \ar[r]^{c^\k} \ar[d]_r & G_0^\k(A) \ar[d]^{\chi}\\
T(A) \ar[r]_{\,.\,^t} & T(A)^{*}
} 
\]
\end{prop}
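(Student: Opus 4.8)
The plan is to verify commutativity of the square on generators $[P]$ for $P$ in $\proj{A}$, which amounts to the identity $\chi_P = r(P)^t$ in $T(A)^*$. I would start by unwinding both sides using dual bases. Fix dual bases $\{(f_i,v_i)\}_1^n \subseteq \Hom_A(P,A)\times P$ for $P$, so that $v = \sum_i f_i(v)v_i$ for all $v\in P$. Then, by \eqref{E:Hattori}, the Hattori-Stallings rank is $r(P) = \sum_i T(f_i(v_i))$. Applying $\,.\,^t$ from \eqref{E:projectives}, the linear form $r(P)^t$ sends $b\in A$ to $\sum_i \tr_{A/\k}(L_b\circ R_{f_i(v_i)})$. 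The goal is to identify this with $\chi_P(b) = \tr_{P/\k}(b_P)$.

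The key computational step is to realize the $\k$-endomorphism $b_P$ of $P$ via the dual bases and push its $\k$-trace through to $A$. Concretely, I would use the isomorphism $P \cong A^n e$ for the idempotent matrix $e = (e_{i,j})\in \Mn(A)$ with $e_{i,j} = f_i(v_j)$ (so that $r(P) = \sum_i T(e_{i,i})$, matching the explicit formula in \ref{SS:Hattori}). Under this identification, for $b\in A$, left multiplication $b_P$ on $P\cong eA^n$ (rows) — or equivalently on $A^n e$ — becomes, entrywise, a matrix built from $L_b$ composed with the $R_{e_{i,j}}$. Taking the $\k$-trace of the resulting endomorphism of $A^n$ (which is finitely generated projective over $\k$ since $A$ is, so all traces are defined) gives $\tr_{P/\k}(b_P) = \sum_{i}\tr_{A/\k}(L_b\circ R_{e_{i,i}}) = \sum_i \tr_{A/\k}(L_b \circ R_{f_i(v_i)})$, since trace is additive over the direct sum decomposition $A^n = \bigoplus A$ and the off-diagonal contributions are killed after restricting to the image of $e$. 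This is exactly $r(P)^t(b)$, which closes the square.

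An alternative, more invariant route avoids choosing the idempotent: one observes that both sides are additive in $P$ and compatible with the functoriality diagram \eqref{E:Hattori2}, and both factor through the trace map $\tr_{P/A}$; then one reduces to $P = A$, where $r(A) = T(1)$ and $\chi_A(b) = \tr_{A/\k}(L_b) = \tr_{A/\k}(L_b\circ R_1) = T(1)^t(b)$, and extends to direct summands of $A^n$ by naturality. I expect the main obstacle to be purely bookkeeping: carefully tracking the left/right module conventions (whether $P$ sits as $A^n e$ or $eA^n$, and correspondingly whether $b_P$ acts by $L_b$ on entries and $e$ acts by right matrix multiplication) so that the composite $L_b\circ R_{e_{i,i}}$ — rather than $R_{e_{i,i}}\circ L_b$ or some transpose — appears, matching the definition of $\,.\,^t$ in \eqref{E:projectives}. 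Once the conventions are pinned down, the identity $\chi_P = r(P)^t$ is a one-line trace computation, and the proposition follows.
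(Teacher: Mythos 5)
Your argument is correct, and it is essentially the standard proof (the paper itself gives none, simply citing Bass [4.7], whose argument is the same dual-bases/idempotent trace computation): writing $P \cong A^n e$ with $e_{i,j}$ built from dual bases, the endomorphism $x \mapsto (bx)e$ of $A^n$ restricts to $b_P$ on $P$ and to $0$ on $A^n(1-e)$, so its $\k$-trace is $\tr_{P/\k}(b_P)$ and equals $\sum_i \tr_{A/\k}(L_b\circ R_{e_{i,i}}) = r(P)^t(b)$, as you say. Two small points of hygiene: the justification is that the extension is $b_P \oplus 0$ with respect to $A^n = A^n e \oplus A^n(1-e)$ (off-diagonal blocks never contribute to a block trace, so nothing needs to be ``killed''), and the transpose ambiguity $e_{i,j} = f_i(v_j)$ versus $f_j(v_i)$ is harmless since only diagonal entries enter; by contrast, your suggested alternative route (reduce to $P=A$ and ``extend to summands by naturality'') is not complete as stated, since additivity over free modules alone does not determine the two maps on arbitrary summands---but your main argument does not rely on it.
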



\subsection{Finite-dimensional algebras over a field}  \label{SS:Finite}

Now let $A$ be a finite-dimensional
algebra over a field $\k$ and let $\rad A$ denote the Jacobson radical of $A$.
Since $\rad A$ is nilpotent, the character $\chi_V$ of any $V$ in $\fgmod{A}$
vanishes on $\rad A$, and hence the character map 
$\chi \colon G_0(A)  \to T(A)^{*}$ of Section~\ref{SS:characters} 
actually takes values in $T(A/\rad A)^* \subseteq T(A)^*$.
By $\k$-linear extension of $\chi$ we obtain a map
\[
\til{\chi} \colon G_0(A)\otimes_{\ZZ} \k \tto T(A/\rad A)^* \ .
\]
For the same reason, the map $\,.\,^t$ in \eqref{E:projectives} takes values in $T(A/\rad A)^*$
and it factors through
$T(A/\rad A)$. Hence $\,.\,^t$ factors through a $\k$-linear map 
\[
\til{\,.\,^t} \colon T(A/\rad A) \tto T(A/\rad A)^* \ .
\]
Finally, we have the composite
\[
\til{r} \colon K_0(A) \otimes_{\ZZ}\k \stackrel{r_\k}{\tto} T(A) \stackrel{\text{can.}}{\onto} T(A/\rad A) \\
\]
with $r_\k$ as in \eqref{E:rk}. By Proposition~\ref{P:projectives} these maps fit into a
commutative diagram of $\k$-linear maps
\begin{equation} \label{E:Finite}
\xymatrix{%
K_0(A) \otimes_{\ZZ} \k \ar[r]^{c \otimes 1_\k} \ar[d]_{\til{r}}
& G_0(A)\otimes_{\ZZ} \k \ar[d]^{\til{\chi}}\\
T(A/\rad A) \ar[r]_-{\til{\,.\,^t} } & T(A/\rad A)^*
} 
\end{equation}
where $c$ is the Cartan map from Section~\ref{SS:projectives}. If $\k$ is a \emph{splitting field} for $A$,
that is, $\End_A(V) = \k$ holds for all irreducible $A$-modules $V$,
then both $\til{r}$ and $\til{\chi}$ are isomorphisms; see \cite[1.6]{mL97a}.


\section{Frobenius algebras} \label{S:Frobenius}

In this section, we give an alternative description of the map \eqref{E:projectives} in the special case where
$A$ is a Frobenius algebra over a commutative base ring $\k$ and derive various consequences for the 
Cartan map.


\subsection{Frobenius algebras}  \label{SS:Frobenius}

We briefly recall some basics concerning
Frobenius algebras refering to \cite{mLxx}
for additional information and some details that are omitted below.

The dual $A^* = \Hom_\k(A,\k)$ carries a standard $(A,A)$-bimo\-dule structure:
\begin{equation*} 
(afb)(x) = f(bxa) \qquad (a,b,x \in A, f \in A^*)\ .
\end{equation*}
The $\k$-algebra $A$ is said to be Frobenius if $A$ is finitely generated projective over $\k$ and $A$ is 
isomorphic to $A^*$ as left $A$-module or, equivalently, as right $A$-module.
These isomorphism amount to the existence of a nonsingular associative 
$\k$-bilinear form $\beta \colon A \times A \to \k$. Given such a form $\beta$, we obtain
an isomorphism of left $A$-modules
\begin{equation*} 
\bl_\beta \colon {}_AA \iso {}_AA^*\ , \qquad a \mapsto \beta(\,.\,,a) \ .
\end{equation*}
In place of $\beta$, one can equally well work with the so-called \emph{Frobenius homomorphism}
\begin{equation*} 
\lambda = \lambda_\beta = \beta(\,.\,,1) = \beta(1,\,.\,) \in A^* \ .
\end{equation*}
Indeed, $\beta(a,b) = \lambda(ab)$ holds for all $a,b \in A$. The isomorphism $\bl_\beta$
then takes the form 
\begin{equation} \label{E:Frobenius2}
\bl_\beta(a) = a\lambda\ .
\end{equation}
The linear form $\lambda$ is a free generator of $A^*$ as both left and as right $A$-module;
see, e.g., \cite[1.1.1]{mLxx}. The 
automorphism $\alpha = \alpha_\beta \in \Aut_\text{$\k$-alg}(A)$ that is given by 
\begin{equation} \label{E:Frobenius2'}
\lambda a = \alpha(a)\lambda \qquad (a \in A)
\end{equation} 
is called the \emph{Nakayama 
automorphism} that is associated to $\beta$.


\subsection{Change of bilinear form} \label{SS:change}

If $\beta, \beta' \colon A \times A \to \k$ are two nonsingular associative 
$\k$-bilinear form then the isomorphism
$\bl_\beta^{-1} \circ \bl_{\beta'} \colon  {}_AA \iso {}_AA$
is given by right multiplication by some 
unit $u \in A^\times$.
Hence, $\beta'(\,.\,,\,.\,) = \beta(\,.\,,\,.\,u)$. The Frobenius homomorphisms
$\lambda' = \bl_{\beta'}(1)$ and $\lambda = \bl_\beta(1)$ are
related by $\lambda'= u\lambda$, and the Nakayama automorphisms $\alpha$
and $\alpha'$ that are associated with
$\beta$ and $\beta'$, resp.,  differ by an inner automorphism:
$\alpha'(a) = u \alpha(a)u^{-1}$.


\subsection{Dual bases}  \label{SS:dual}

Let $A$  be a Frobenius $\k$-algebra with nonsingular associative 
$\k$-bilinear form $\beta$ as in \ref{SS:Frobenius}. In view of the canonical isomorphism $\End_\k(A) \cong A \otimes_\k A^*$
the isomorphism 
$\bl_\beta$ yields an isomorphism $\End_\k(A) \iso A \otimes _\k A$.
Writing the image of $1_A \in \End_\k(A)$ under this
isomorphism as $\sum_{i} x_i \otimes y_i \in A \otimes_\k A$, we have
\begin{equation} \label{E:dual}
a = \sum_i \beta(a,y_i) x_i = \sum_i \lambda(ay_i) x_i  \qquad (a \in A)\ .
\end{equation}
The elements $\{ x_i \}, \{ y_i \}$ of $A$ are usually referred to as \emph{dual bases} for $\beta$. 
The first equation in \eqref{E:dual} is equivalent to 
\begin{equation} \label{E:dual'}
a = \sum_i \beta(x_i,a)y_i  \qquad (a \in A)\ ;
\end{equation}
see \cite[equation (8)]{mLxx}.


\subsection{The Higman trace}  \label{SS:Higman}

Let $A$  be a Frobenius $\k$-algebra with nonsingular associative 
$\k$-bilinear form $\beta$ as in \ref{SS:Frobenius}. Since
the element $\sum_{i} x_i \otimes y_i \in A \otimes_\k A$ is completely determined $\beta$,
the map
\begin{equation} \label{E:Higman}
\tau = \tau_\beta \colon A \to A \ , \qquad a \mapsto \sum_i  x_i a y_i
\end{equation}
only depends on $\beta$ and not on the choice of dual bases $\{ x_i \}, \{ y_i \}$ for $\beta$.
Furthermore, $\tau$ is clearly $\cen(A)$-linear, where $\cen(A)$ denotes the center of $A$.

Part (a) of the following lemma gives the desired description of the map \eqref{E:projectives};
part (b) will not be needed in this article but may be of independent interest.

\begin{lem} \label{L:Higman}
Let $(A,\beta)$ be a Frobenius algebra with Frobenius homomorphism 
$\lambda = \lambda_\beta \in A^*$ and Nakayama automorphism $\alpha = \alpha_\beta$,
and let $\tau$ be as in \eqref{E:Higman}. Then:
\begin{enumerate}
\item
$T(a)^t = \tau(a)\lambda$ holds or all $a \in A$.
In particular, $\tau$ vanishes on $[A,A]$. 
\item
$\beta(\tau(a),b) = \beta(a,\tau(b))$ and $a \tau(b) = \tau(b) \alpha(a)$ for all $a,b \in A$.
Moreover, $\alpha\tau = \tau\alpha$.
\end{enumerate}
\end{lem}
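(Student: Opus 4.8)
The plan is to work directly from the dual-basis formulas \eqref{E:dual} and \eqref{E:dual'} together with the associativity of $\beta$, i.e. the identity $\beta(xy,z)=\beta(x,yz)=\lambda(xyz)$ and the defining relation \eqref{E:Frobenius2'} of the Nakayama automorphism, $\lambda a=\alpha(a)\lambda$.

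First I would prove part (a). Starting from the definition, $\tau(a)\lambda$ is the linear form $b\mapsto(\tau(a)\lambda)(b)=\lambda(b\tau(a))=\sum_i\lambda(bx_iay_i)=\sum_i\beta(bx_ia,y_i)$. Applying \eqref{E:dual'} with the argument $ba$ in place of $a$ — that is, $ba=\sum_i\beta(x_i,ba)y_i$ is not quite what I want; instead I use \eqref{E:dual} in the form $c=\sum_i\lambda(cy_i)x_i$, but with the roles arranged so that the sum $\sum_i\beta(bx_ia,y_i)x_i$ collapses. Concretely, $\sum_i\lambda(bx_iay_i)=\sum_i\lambda\bigl((ay_i)(bx_i)\bigr)$ does not immediately help, so the cleaner route is: by \eqref{E:dual'}, for any $c\in A$ one has $c=\sum_i\beta(x_i,c)y_i$, hence $\sum_i\beta(bx_ia,y_i)=\sum_i\lambda(bx_iay_i)$, and now feeding $ay_i b$ — rather, I recognize $\sum_i x_i\otimes y_i$ is the ``Casimir'' element and use the standard identity $\sum_i x_i c\otimes y_i=\sum_i x_i\otimes cy_i$ for $c\in A$ (which follows from \eqref{E:dual} applied entrywise), so that $\tau(a)\lambda$ evaluated at $b$ becomes $\sum_i\lambda(bx_iay_i)=\tr_{A/\k}(L_b\circ R_a)$ after expanding the trace in the dual bases $\{x_i\},\{y_i\}$ of $A$ over $\k$. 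This last identification — that $\tr_{A/\k}(L_b\circ R_a)=\sum_i\lambda(bx_iay_i)$ — is exactly the content of the dual-basis description of the trace, since $\{x_i\}$ and $\{\beta(\cdot,y_i)\}=\{x_i\lambda\}$ are $\k$-dual bases. Comparing with \eqref{E:projectives} gives $T(a)^t=\tau(a)\lambda$. The ``in particular'': if $a\in[A,A]$ then $T(a)=0$, so $T(a)^t=0$, i.e. $\tau(a)\lambda=0$; since $\lambda$ is a free generator of $A^*$ as a right $A$-module, left multiplication by $\lambda\colon A\to A^*$ (namely $c\mapsto c\lambda$) is injective — wait, it is $\bl_\beta$ reversed; in any case $a\mapsto\tau(a)\lambda$ vanishing forces $\tau(a)=0$ because $a\mapsto a\lambda$ is the isomorphism $\bl_\beta$ of \eqref{E:Frobenius2}. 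Hence $\tau$ vanishes on $[A,A]$.

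Next, part (b). For the first identity $\beta(\tau(a),b)=\beta(a,\tau(b))$: expand $\beta(\tau(a),b)=\sum_i\beta(x_iay_i,b)=\sum_i\lambda(x_iay_ib)$ and $\beta(a,\tau(b))=\sum_i\lambda(ax_iby_i)$; the Casimir identity $\sum_i x_i\otimes y_i=\sum_i y_i'\otimes x_i'$-type symmetry is not available, so instead I use \eqref{E:dual} twice — first to replace $\sum_i x_i(ay_ib)\leadsto$ nothing, rather I note $\sum_i\lambda(x_i a y_i b)=\sum_i\lambda\bigl((y_ib)(x_ia)\bigr)$ by cyclicity of $\lambda$ on $[A,A]$ (which we just proved $\tau$ kills, but more basically $\lambda$ is a trace form — careful: $\lambda$ need not be a trace form, only $\lambda\circ\text{(symmetrization)}$; however $\lambda(uv)=\lambda(vu)$ fails in general, so I must avoid cyclicity). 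The correct manipulation: apply \eqref{E:dual'} to the element $ay_ib$ summed against $x_i$... The cleanest is to use the two-variable version: $\sum_i x_i\otimes cy_i=\sum_i x_ic\otimes y_i$ for all $c$ (entrywise consequence of \eqref{E:dual}), which gives $\sum_i x_i a y_i b=\sum_i x_i a\,(y_i b)$ and, pushing $a$ the other way, $\sum_i (x_i a) y_i b$; applying the identity with $c=a$ on the left tensor factor yields $\sum_i x_i a\otimes y_i=\sum_i x_i\otimes a y_i$, hence $\sum_i\lambda(x_ia y_ib)=\sum_i\lambda(x_i(ay_i)b)=\sum_i\lambda(x_i\cdot ay_i b)$, and a further application of the same identity with $c=b$ acting on the right: $\sum_i x_i\otimes y_ib=\sum_i x_ib'\!$... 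I will organize this so that both sides equal $\sum_i\lambda(x_iax_j\ldots)$ — the essential point is the single ``move a scalar across the tensor'' lemma, applied in both slots, reduces each side to $\tr_{A/\k}(R_b L_a)=\tr_{A/\k}(L_a R_b)$, which are equal. For $a\tau(b)=\tau(b)\alpha(a)$: compute $\beta(a\tau(b),c)=\lambda(a\tau(b)c)$ and $\beta(\tau(b)\alpha(a),c)=\lambda(\tau(b)\alpha(a)c)=\lambda(\tau(b)\alpha(a)c)$, then use \eqref{E:Frobenius2'} to rewrite $\lambda(\tau(b)\alpha(a)c)$; alternatively, and more simply, use part (a): $a\tau(b)\lambda$ and $\tau(b)\alpha(a)\lambda=\tau(b)\lambda a$ (by \eqref{E:Frobenius2'}), and show $a\tau(b)\lambda=\tau(b)\lambda a$ as elements of $A^*$ — this is the assertion that the image of $\tau$ lands in the set of $c$ with $ac\lambda=c\lambda a$ for all $a$, equivalently $(ac)\lambda=(c\lambda)a=\lambda(\cdot\, \text{...})$; evaluating at $x$: $(a c\lambda)(x)=\lambda(xac)$ and $(c\lambda a)(x)=(c\lambda)(ax)=\lambda(axc)$, so I need $\lambda(xac)=\lambda(axc)$ for all $x$ when $c=\tau(b)$, i.e. $\lambda\bigl([x,a]\tau(b)\bigr)=0$; but $[x,a]\tau(b)=[x,a\tau(b)]-a[x,\tau(b)]$... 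I instead invoke that $\tau(b)$ lies in the image of $\tau$, and use $\tau$-Casimir-invariance $\sum_i x x_i\otimes y_i=\sum_i x_i\otimes y_i x$ — precisely, the Casimir element $\sum_i x_i\otimes y_i$ satisfies $(x\otimes1)\Omega=\Omega(1\otimes x)$ for all $x$ in the sense $\sum_i xx_i\otimes y_i=\sum_i x_i\otimes y_ix$, which is the standard property and follows from \eqref{E:dual} and \eqref{E:dual'}; this directly yields $x\tau(b)=\sum_i xx_iby_i=\sum_i x_iby_ix=\tau(b)x$... no, that would say $\tau(b)$ is central, which is false — the correct Casimir property in the Frobenius (non-symmetric) case involves $\alpha$: $\sum_i xx_i\otimes y_i=\sum_i x_i\otimes y_i\alpha(x)$, and this is exactly what gives $x\tau(b)=\tau(b)\alpha(x)$. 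Finally $\alpha\tau=\tau\alpha$: since $\alpha$ is a $\k$-algebra automorphism and the dual bases for the form $\beta^\alpha:=\beta(\alpha^{-1}(\cdot),\alpha^{-1}(\cdot))$ are $\{\alpha(x_i)\},\{\alpha(y_i)\}$ while $\beta^\alpha$ has Nakayama automorphism $\alpha$ again and in fact $\lambda\circ\alpha=\lambda$ (from \eqref{E:Frobenius2'}, $\lambda(\alpha(a))=$ ... actually $\lambda a=\alpha(a)\lambda$ evaluated at $1$ gives $\lambda(a)=\lambda(\alpha(a))$), one checks $\beta^\alpha=\beta$, hence the associated Higman trace is unchanged: $\tau=\tau_\beta=\tau_{\beta^\alpha}=\alpha\circ\tau\circ\alpha^{-1}$, i.e. $\alpha\tau=\tau\alpha$.

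The main obstacle I anticipate is bookkeeping in part (b): getting the Nakayama automorphism to appear in the right place in the Casimir/dual-basis identity $\sum_i xx_i\otimes y_i=\sum_i x_i\otimes y_i\alpha(x)$, and being careful that $\lambda$ is \emph{not} a trace form in general so cyclicity can only be used after $\tau$ has ``absorbed'' the commutator. Once that identity is established cleanly from \eqref{E:dual} and \eqref{E:dual'}, the two displayed relations in (b) are immediate, and the commutation $\alpha\tau=\tau\alpha$ follows either from that identity applied to $x$ replaced by $\alpha(x)$ or from the invariance $\beta=\beta^\alpha$ observation above. Part (a) is the conceptually important statement and its proof is a short trace computation in the $\k$-dual bases $\{x_i\}$, $\{x_i\lambda\}$ together with the injectivity of $\bl_\beta$.
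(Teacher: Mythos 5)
Your proposal is correct in substance, and for part (b) it takes a genuinely different route from the paper; the main criticism is that the write-up is cluttered with false starts and leaves its one crucial identity asserted rather than checked. Part (a) is essentially the paper's own proof: $\tr_{A/\k}(L_b\circ R_a)=\sum_i\lambda(bx_iay_i)=\lambda(b\tau(a))$ via the dual bases coming from \eqref{E:dual}, then injectivity of $\bl_\beta$ (minor slip: the dual functionals are $\beta(\,.\,,y_i)=y_i\lambda$, not $x_i\lambda$). For the identity $\beta(\tau(a),b)=\beta(a,\tau(b))$ the paper computes directly using $\beta(a,b)=\beta(b,\alpha(a))$ and the auxiliary fact $\sum_i x_i\otimes y_i=\sum_i y_i\otimes\alpha(x_i)$, whereas your route identifies both sides with the single number $\tr_{A/\k}(L_a\circ R_b)$: indeed \eqref{E:dual} and \eqref{E:dual'} give the two expansions $\tr_{A/\k}(\phi)=\sum_i\lambda(\phi(x_i)y_i)=\sum_i\lambda(x_i\phi(y_i))$, and with $\phi=L_a\circ R_b$ these evaluate to $\beta(a,\tau(b))$ and $\beta(\tau(a),b)$ respectively -- a clean argument whose hidden engine is the independence of the Hattori--Stallings trace from the choice of dual bases. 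For $a\tau(b)=\tau(b)\alpha(a)$ you invoke the twisted Casimir identity $\sum_i cx_i\otimes y_i=\sum_i x_i\otimes y_i\,\alpha(c)$; you correctly rejected the untwisted version, but this identity is the crux of the step and you only assert it. It does hold, and the check is short: apply $u\otimes v\mapsto\lambda(ua)v$ to both sides, use $\lambda(cz)=\lambda(z\alpha(c))$ from \eqref{E:Frobenius2'} together with \eqref{E:dual'} to see that both give $a\alpha(c)$, and conclude by nonsingularity (so it needs the Nakayama relation, not just the dual-basis formulas); multiplying the identity out then yields the relation at once, bypassing the paper's double-sum computation via \eqref{E:Higman'}. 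Your proof of $\alpha\tau=\tau\alpha$ -- $\lambda\circ\alpha=\lambda$, hence $\beta$ is $\alpha$-invariant, hence $\{\alpha(x_i)\},\{\alpha(y_i)\}$ are again dual bases for the same form $\beta$ and independence of $\tau_\beta$ from the choice of dual bases gives $\alpha\circ\tau\circ\alpha^{-1}=\tau$ -- is valid and arguably slicker than the paper's, which deduces commutation from the two previously proved identities and nondegeneracy of $\beta$. What your approach buys is conceptual economy (everything reduces to well-definedness of the trace and of the Casimir element $\sum_i x_i\otimes y_i$); what the paper's buys is that every step is an explicit computation with nothing deferred to a ``standard'' identity.
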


\begin{proof}
(a) 
Equation \eqref{E:Hattori} gives
\begin{equation*} 
\tr_{A/\k}(\phi) = \sum_i \beta(\phi(x_i),y_i) =  \sum_i \lambda(\phi(x_i)y_i) 
\end{equation*}
for any $\phi\in\End_\k(A)$.
Applying this to the endomorphism $\phi = L_b \circ R_a$ in \eqref{E:projectives},
we obtain 
\[
\tr_{A/\k}(L_b \circ R_a) = \sum_i  \lambda(bx_i a y_i) = \lambda(b\tau(a))
\]
for $a,b \in A$. This proves the asserted formula for $T(a)^t$.
Since $T([A,A])^t = 0$, it follows that $\tau(\,.\,)\lambda$ vanishes on $[A,A]$. 
Finally, since $\tau(\,.\,)\lambda = \bl_\beta\circ\tau$ by \eqref{E:Frobenius2},
injectivity of $\bl_\beta$ implies that $\tau$ vanishes on $[A,A]$.

(b)
Equation \eqref{E:Frobenius2'} says that
\[
\beta(a,b) = \beta(b,\alpha(a)) \qquad (a,b \in A)\ .
\]
Hence \eqref{E:dual'} can be written as $a = \sum_i \beta(a, \alpha(x_i))y_i$. It follows that
\[
\sum_i x_i \otimes y_i
= \sum_i y_i \otimes \alpha(x_i) \ ,
\]
both elements being the image of 
$1_A \in \End_\k(A)$ under the isomorphism 
$1_A \otimes \bl_\beta^{-1} \colon \End_\k(A) = A \otimes _\k A^* \iso A \otimes _\k A$.
Therefore, 
\begin{equation} \label{E:Higman'}
\tau(a) = \sum_i y_ia \alpha(x_i) \ .
\end{equation}
Now we compute, for $a,b \in A$,
\begin{align*}
\beta(\tau(a),b) &= \sum_i \beta(x_iay_i,b)
= \sum_i \beta(x_ia,y_ib) 
= \sum_i \beta(y_ib, \alpha(x_ia)) \\
&= \sum_i \beta(y_ib\alpha(x_i),\alpha(a)) 
= \sum_i \beta(a, y_ib\alpha(x_i)) 
= \beta(a,\tau(b))\ .
\intertext{and}
a \tau(b) &\underset{\eqref{E:Higman'}}{=} \sum_{i} a y_i b \alpha(x_i)
\underset{\eqref{E:dual'}}{=} \sum_{i,j} \beta(x_j,ay_i) y_j b \alpha(x_i)
= \sum_{i,j} \beta(x_ja,y_i) y_j b \alpha(x_i) \\
&= \sum_{i,j} y_j b \beta(x_ja,y_i) \alpha(x_i)
\underset{\eqref{E:dual}}{=} \sum_j y_j b \alpha(x_j a) 
\underset{\eqref{E:Higman'}}{=} \tau(b) \alpha(a) \ .
\end{align*}
Finally, 
\[
\beta(b, \alpha\tau(a)) = \beta(\tau(a), b) = \beta(a, \tau(b)) = \beta(\tau(b), \alpha(a))
= \beta(b, \tau\alpha(a)) \ ,
\]
which shows that $\alpha\tau(a) = \tau\alpha(a)$.
\end{proof}

We will refer to  $\tau = \tau_\beta$ 
as the \emph{Higman trace} that is associated to $\beta$.
If $\beta' \colon A \times A \to \k$ is another nonsingular associative 
$\k$-bilinear form then 
$\tau_{\beta'}(a) = \tau(a)u^{-1}$ for some 
unit $u \in A^\times$; see \ref{SS:change}.

\subsubsection{} \label{SSS:Casimir} 
In  \cite{dH55}, Higman introduced the following \emph{Casimir operator} 
\footnote{\cite{cCiR62} uses the terminology Gasch{\"u}tz-Ikeda operator.}
for a given nonsingular associative bilinear form $\beta$ on $A$\,:
\begin{equation*} 
A \to \cen(A) \ , \quad a \mapsto \sum_i y_i a x_i \ ;
\end{equation*}
see also \cite[3.1]{mLxx}.
If $A$ is a \emph{symmetric} $\k$-algebra, that is, $A$ and $A^*$ are 
isomorphic as $(A,A)$-bimodules, then the form $\beta$ can be chosen to be
symmetric. The corresponding Nakayama automorphism $\alpha$ is the identity and
by \eqref{E:Higman'}
the Higman trace $\tau$ coincides with the Casimir operator in this case. 

\subsubsection{} \label{SSS:tiltau} 
Let $A$ be a 
Frobenius algebra over a field $\k$. By Section~\ref{SS:Finite}
the map $\,.\,^t$ factors through $T(A/\rad A)$, 
and so Lemma~\ref{L:Higman}(a)
tells us that $\tau$ vanishes on $\rad A$. Moreover, since $\,.\,^t$ 
takes values in $T(A/\rad A)^*$, we also have $b\tau(a)\lambda = T(a)^t(\,.\,b) = 0$
for all $a \in A$, $b \in \rad A$. Hence, $(\rad A)\tau(a) = 0$ and so
$\Im \tau \subseteq \soc A$, the socle of $A$ (which is in fact the same for ${}_AA$
and for $A_A$ by \cite[58.12]{cCiR62}).
This shows that the Higman trace $\tau$ factors through a map
\begin{equation} \label{E:Finite2}
\til{\tau} \colon T(A/\rad A) \tto \soc A \into A \ .
\end{equation}


\subsection{Examples}  \label{SS:Examples}

\subsubsection{} \label{SSS:matrices} 
If $A = \Mat_n(\k)$ is the $n \times n$-matrix algebra over $\k$ then we can take
the ordinary trace $\lambda = \trace \in A^*$ as Frobenius homomorphism.
Dual bases are given by $\sum_i x_i \otimes y_i = \sum_{j,k} e_{j,k}\otimes e_{k,j}$,
where $e_{l,m}$ is the matrix with $1$ in the $(l,m)$-position and $0$s elsewhere.
The Higman trace is 
\[
\tau(a) = \trace(a)1_{n\times n}\ .
\]

\subsubsection{} \label{SSS:groupalgebra} 
The group algebra $A = \k G$ of a finite group $G$ has Frobenius homomorphism 
$\lambda$ with $\lambda(\sum_{g\in G} k_gg) = k_1$.
Dual bases for $\lambda$ are $\sum_i x_i \otimes y_i = \sum_{g\in G} g\otimes g^{-1}$, and
the Higman trace is \[
\tau(a) = \sum_{g\in G} gag^{-1} \ .
\]

\subsubsection{} \label{SSS:Hopfalgebra} 
Generalizing \ref{SSS:groupalgebra}, let $H$ be a Hopf $\k$-algebra that is finitely generated 
projective over $\k$, with augmentation $\e$, antipode $\sS$,
and comultiplication $\Delta$. By \cite{bP71}, $H$ is Frobenius over $\k$ if and only if the $\k$-module 
$\Hint_H^r = \{ a \in H \mid ab =  \e(b) a \}$
of right integrals of $H$ is free of rank $1$ over $\k$. In this case, we may pick
integrals $\Lambda \in \Hint_H^r$ and $\lambda \in \Hint_{H^*}^l$ satisfying 
$\lambda(\Lambda) = 1$.  The form $\lambda$ serves as Frobenius
homomorphism and dual bases are given by 
$\sum x_i \otimes y_i = \sum \Lambda_2 \otimes \sS(\Lambda_1)$. Here 
we have used the standard Sweedler notation 
$\Delta h = \sum h_1 \otimes h_2$.
Thus, the Higman trace takes the form 
\[
\tau(a) = \sum \Lambda_2 a \sS(\Lambda_1)\ .
\]
In the special case where $H = \k G$, we may take $\Lambda = \sum_{g\in G} g$ 
and $\lambda$ as in \ref{SSS:groupalgebra} to obtain the previous formula.


\subsection{Rank and determinant of the Cartan map}  \label{SS:Cartan}

We now specialize to the case where $A$ is a 
Frobenius algebra over a field $\k$. Our goal
is to determine the rank (i.e., the dimension of the image) of the map
\[
c \otimes 1_\k \colon K_0(A) \otimes_{\ZZ} \k \tto G_0(A) \otimes_{\ZZ} \k 
\]
in \eqref{E:Finite}. Since
$K_0(A)$ and $G_0(A)$ are free abelian groups of the same finite rank,
the number of isomorphism classes of irreducible left $A$-modules,
the Cartan map is given by a square integer matrix $C$, called the
\emph{Cartan matrix} of $A$.
The rank of $c \otimes 1_\k$ is equal to the rank of the Cartan
matrix $C$ when $\ch \k = 0$, and to the rank of the reduction of $C$ modulo $p$
in case $\ch \k = p > 0$. 

For part (b) of the following result, recall from \eqref{E:Finite2} that the Higman trace $\tau$ 
factors as $\tau = \text{can}\circ \til{\tau}$ where $\text{can} \colon A \onto T(A/\rad A)$ 
is the canonical map.
Note that $\til{\tau}^{-1}(A^\times \cup \{ 0 \} )$
only depends 
on $A$ and not on the choice of $\beta$. 

\begin{thm} \label{T:Frobenius}
Let $A$ be a Frobenius $\k$-algebra, where $\k$ is a splitting field for $A$ Then:
\begin{enumerate}
\item
$\rank(c \otimes 1_\k) = \rank \tau$.
\item
The following are equivalent:
\begin{enumerate}
\item $A$ is semisimple;
\item $C=\Id$ and $\til{\tau}^{-1}(A^\times \cup \{ 0 \} ) \neq 0$;
\item $\ch \k$ does not divide $\det C$ and $\til{\tau}^{-1}(A^\times \cup \{ 0 \} ) \neq 0$. 
\end{enumerate} 

\end{enumerate} 
\end{thm}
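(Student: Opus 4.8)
For part (a), the strategy is to work inside the commutative square \eqref{E:Finite}, using the fact that when $\k$ is a splitting field the vertical maps $\til{r}$ and $\til{\chi}$ are isomorphisms (this is cited in Section~\ref{SS:Finite}). Consequently $\rank(c\otimes 1_\k)$ equals the rank of the bottom map $\til{\,.\,^t}\colon T(A/\rad A)\to T(A/\rad A)^*$. Now Lemma~\ref{L:Higman}(a) identifies $\,.\,^t$ with $T(a)\mapsto \tau(a)\lambda = \bl_\beta(\tau(a))$, and since $\bl_\beta$ is a $\k$-linear isomorphism $A\iso A^*$, the rank of $\,.\,^t$ equals the rank of $\tau$ itself. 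One must be a little careful passing to the ``tilde'' versions: by \ref{SSS:tiltau}, $\tau$ kills $\rad A$ and has image in $\soc A$, so it factors as $\tau=\text{can}\circ\til{\tau}$ with $\til{\tau}\colon T(A/\rad A)\to\soc A\into A$; since $\text{can}\colon A\onto T(A/\rad A)$ restricted to $\soc A$ composed with $\til{\,.\,^t}$ recovers $\bl_\beta\circ\til{\tau}$ up to the identifications, we get $\rank\til{\,.\,^t}=\rank\til{\tau}=\rank\tau$. This chain of equalities gives (a).

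For part (b), the implication (i)$\Rightarrow$(ii) is standard: if $A$ is semisimple then $\fgmod{A}=\proj{A}$, so $c$ is the identity and $C=\Id$; moreover over a splitting field $A\cong\prod_j \Mat_{n_j}(\k)$, and by Example~\ref{SSS:matrices} the Higman trace on each block is $a\mapsto\trace(a)1$, which (choosing $a$ a rank-one idempotent in one block) lands in $A^\times\cup\{0\}$ nontrivially — so $\til\tau^{-1}(A^\times\cup\{0\})\neq 0$. The implication (ii)$\Rightarrow$(iii) is trivial since $\det\Id=1$. The heart of the matter is (iii)$\Rightarrow$(i). Here the hypothesis ``$\ch\k\nmid\det C$'' says exactly that $c\otimes 1_\k$ is invertible, hence by part (a) that $\tau$ has full rank, i.e. $\rank\tau=\dim_\k T(A/\rad A)=\dim_\k A/\rad A$. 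Since $\Im\tau\subseteq\soc A$, this forces $\dim_\k\soc A\geq\dim_\k A/\rad A$. One then wants to combine this with the second hypothesis, that some element $z$ with $\til\tau(z)\in A^\times$ exists, to conclude $\rad A=0$.

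The main obstacle is precisely this last step: deducing semisimplicity from the two hypotheses in (iii). The idea is that $\til\tau(z)\in A^\times$ means $\tau(x)$ is a unit for some $x\in A$; but $\tau$ is $\cen(A)$-linear and (by Lemma~\ref{L:Higman}(b), or directly from $\Im\tau\subseteq\soc A$ in the field case) its image lies in $\soc A$, an ideal. A unit lying in the two-sided ideal $\soc A$ forces $\soc A=A$. Combined with the general fact that $\soc A\cdot\rad A=0$ (from \ref{SSS:tiltau}), this gives $\rad A = A\cdot\rad A=\soc A\cdot\rad A=0$, so $A$ is semisimple. I expect the write-up will need to check carefully that ``$\til\tau(z)\in A^\times$ for some $z\neq 0$'' genuinely produces a unit in $\soc A$ — i.e. that $\til\tau$ takes values in $\soc A$, which is \eqref{E:Finite2} — and that this implication does not secretly need the rank hypothesis; in fact it appears the unit condition alone already yields semisimplicity, with the rank/determinant condition in (iii) being the ``converse-direction'' input that makes (i), (ii), (iii) genuinely equivalent rather than a one-way chain. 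That subtlety — sorting out which hypothesis does which job — is the part to get right.
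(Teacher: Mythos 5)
Your part (a) and the implications (i)$\Rightarrow$(ii)$\Rightarrow$(iii) follow the paper's argument and are correct. The genuine gap is in (iii)$\Rightarrow$(i), where you have misread the second hypothesis. The condition appearing in (ii) and (iii) is $\til{\tau}^{-1}(A^\times \cup \{ 0 \}) \neq 0$: there exists a \emph{nonzero} class $z \in T(A/\rad A)$ with $\til{\tau}(z)$ a unit \emph{or zero}. You replace this by ``some $z$ with $\til{\tau}(z)\in A^\times$ exists,'' and then correctly observe that a unit lying in the ideal $\soc A$ forces $\soc A = A$, hence semisimplicity --- but under that stronger reading the determinant clause would indeed be superfluous for (iii)$\Rightarrow$(i), which is exactly the confusion you flag at the end. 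Under the hypothesis as actually stated, your argument breaks down: $\til{\tau}^{-1}(A^\times\cup\{0\})\neq 0$ holds, for instance, whenever $\til{\tau}$ has nonzero kernel, so by itself it yields neither a unit in $\soc A$ nor semisimplicity.

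What is missing is the step that makes the two clauses of (iii) interact. From invertibility of $c\otimes 1_\k$ and part (a) you do obtain $\rank \tau = \dim_\k T(A/\rad A)$; since $\tau$ factors through the canonical surjection $A \onto T(A/\rad A)$ followed by $\til{\tau}$, this says precisely that $\til{\tau}\colon T(A/\rad A)\to A$ is \emph{injective}. Only now does the second clause bite: the nonzero $z$ it provides satisfies $\til{\tau}(z)\neq 0$, hence $\til{\tau}(z)\in A^\times$, and since $\til{\tau}(z)\in\soc A$ by \eqref{E:Finite2}, your ``unit in the socle'' argument applies and gives $\soc A = A$, i.e.\ $A$ semisimple. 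So the rank/determinant hypothesis is not a ``converse-direction'' input; it is what upgrades ``unit or zero'' to ``unit.'' (Your intermediate inequality $\dim_\k\soc A \geq \dim_\k A/\rad A$ is not needed and plays no role.)
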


\begin{proof}
(a)
Since $\til{r}$ and $\til{\chi}$ are isomorphisms by our hypothesis
on $\k$, the commutative diagram \eqref{E:Finite}
tells us that $c \otimes 1_\k$  and $\til{\,.\,^t}$ have the same rank.
Finally, by Lemma~\ref{L:Higman}(a) and \eqref{E:Frobenius2}, the rank
of $\til{\,.\,^t}$ is the same as the rank of the Higman trace $\tau$\,.

(b)
For (i) $\Rightarrow$ (ii), note that $C=\Id$ because $\proj{A} = \fgmod{A}$  for
semisimple $A$. Furthermore, $A$ is a finite product of matrix algebras $\Mat_{n_i}(\k)$. 
Letting $a \in A$ denote the element whose $i^\text{th}$ component is the $n_i \times n_i$ diagonal matrix
$\diag(1,0,\dots,0)$ and choosing $\tau$ componentwise as in \ref{SSS:matrices}, we obtain
$\tau(a) = 1$. Therefore, $\til{\tau}^{-1}(A^\times \cup \{ 0 \} ) \neq 0$.

The implication (ii) $\Rightarrow$ (iii) being clear, it remains to prove (iii) $\Rightarrow$ (i).
The hypothesis that $\ch \k$ does not divide $\det C$ says that $c \otimes 1_\k$ is invertible; so we know that
$\rank(c \otimes 1_\k) = \dim_\k G_0(A) \otimes_{\ZZ}\k = 
\dim_\k T(A/\rad A)^*$, where the second equality holds because $\til{\chi}$ is an
isomorphism. Theorem~\ref{T:Frobenius} now gives $\rank \tau = \dim_\k T(A/\rad A)^*$,
and hence the map $\til{\tau}$ in \eqref{E:Finite2} is injective.
Therefore, our hypothesis $\til{\tau}^{-1}(A^\times \cup \{ 0 \} ) \neq 0$ 
implies that there exists $a \in A$ such that $\tau(a)$ is a unit in $A$.
Since $\tau(a) \in \soc A$ by \eqref{E:Finite2}, this forces $A$ to be semisimple.
\end{proof}

Part (b) of Theorem~\ref{T:Frobenius} is most useful for algebras where the condition
$\til{\tau}^{-1}(A^\times \cup \{ 0 \} ) \neq 0$ is \emph{a priori} known to hold. One such 
class of algebras, taken from the original version \cite[Theorem 3.4]{mL97a}
of Theorem~\ref{T:Frobenius}, is as follows.

\subsubsection{} \label{SSS:Hopfalgebra2} 
Let $H$ be a finite-dimensional Hopf algebra over the splitting field $\k$. Assume that $\sS^2$ is inner,
say $\sS^2 = u^{-1}(\,.\,)u$ for some unit $u \in H^\times$. Choosing $\tau$ as in
\ref{SSS:Hopfalgebra} we obtain
\[
u^{-1}\tau(u) = \sum u^{-1}\Lambda_2 u \sS(\Lambda_1) = \sS(\sum \Lambda_1\sS(\Lambda_2))
= \sS(\e(\Lambda)1) \in \k \ .
\]
Since $H$ is augmented, the class of $u$ in $T(H/\rad H)$ is nonzero, and the above
computation shows that $\tau(u) \in \k u \subseteq H^\times \cup \{ 0 \}$. Therefore,
$\til{\tau}^{-1}(H^\times \cup \{ 0 \} ) \neq 0$.


\section{Modules over Hopf comodule algebras} \label{S:Hopf}

In this section, $H$ will denote a 
Hopf algebra over the commutative ring $\k$\,, with
unit $u$, multiplication $m$, counit $\e$, comultiplication $\Delta$, and antipode $\sS$.
We will use the Sweedler notation $\Delta h = \sum h_1 \otimes h_2$.
Throughout, $\otimes = \otimes_\k$ and $\,.\,^* = \Hom_\k(\,.\,,\k)$ denotes $\k$-linear duals.
Finally, $\langle  \,.\, , \,.\, \rangle \colon H^* \times H \to \k$
is the evaluation pairing. 


\subsection{$H$-comodule algebras and $H$-Galois extensions}  \label{SS:Galois}

A $\k$-algebra $B$ is called a right $H$-comodule algebra if $B$ is a right $H$-comodule,
with structure map 
\[
\rho \colon B \rightarrow B \otimes H\ ,\qquad b \mapsto \sum b_0 \otimes b_1\ ,
\]
such that $\rho(ab) = \sum a_0b_0 \otimes a_1b_1$ for all $a,b \in B$ and $\rho(1) = 1 \otimes 1$; see
\cite[4.1.2]{sM93}. The  $H$-coinvariants in $B$, defined by 
\[
A = B^{co\, H} = \{a \in B \mid \rho(a) = a \otimes 1\} \ ,
\]
form a $\k$-subalgebra of $B$.

The standard $(B,B)$-bimodule structure on $B$ gives rise to a $(B,B)$-bimodule structure on $B \otimes H$.
Since $\rho$ is an $(A,A)$-bimodule map, we obtain a natural left $B$-module map
\begin{equation} \label{E:Galois1}
{}'\!\!\rho \colon B \otimes_A B \to B \otimes H\ ,  \qquad b \otimes \til{b} \mapsto (b \otimes 1)\rho(\til{b})
\end{equation}
and a corresponding right $B$-module map
\begin{equation} \label{E:Galois2}
\rho' \colon B \otimes_A B \to B \otimes H\ ,  \qquad b \otimes \til{b} \mapsto \rho(b)(\til{b} \otimes 1)\ .
\end{equation}
The extension $B/A$ is said to be \emph{$H$-Galois} if ${}'\!\!\rho$ or $\rho'$ is bijective. In case
the antipode $\sS$ is bijective both conditions are equivalent: ${}'\!\!\rho$ is bijective if and only
if $\rho'$ is so; see \cite[8.1.1 and subsequent remarks]{sM93}. In particular, this
equivalence holds when $H$ is finitely generated projective over $\k$, because $\sS$ is known to be
bijective in this case \cite[Proposition 4]{bP71}. Furthermore,
if $H$ is finitely generated projective over $\k$ and $B/A$ is $H$-Galois then $B$ is
finitely generated projective as $A$-module both on the left and on the right; see \cite[1.7, 1.8]{hKmT81}.

Important examples of $H$-Galois extensions are
provided by crossed products $B=A\#_{\sigma}H$. In fact, crossed products
are precisely those right $H$-Galois extensions
that enjoy the so-called (right) normal basis property; cf.~\cite[Corollary 8.2.5]{sM93}.


\subsection{Tensor products of modules}  \label{SS:tensor}

Let $B$ be a right $H$-comodule algebra. Given $M$ be in $\Mod{B}$ and $V$ in $\Mod{H}$,
the tensor product $M \otimes V$ becomes a left $B$-module with the ``diagonal'' action
\[
b(m \otimes v) = \sum b_0m \otimes b_1v \qquad (b \in B, m \in M, v \in V)\ .
\]
We note the following basic properties:
\subsubsection{Functoriality} \label{SSS:tensor}
If $f \colon M \to M'$ is a $B$-module map and $g \colon V \to V'$ is an 
$H$-module map, then
$f\otimes g \colon M\otimes V \to M'\otimes V'$ is a $B$-module map.
\subsubsection{Associativity} 
If $V$ and $V'$ are  $H$-modules then, viewing
$V\otimes V'$ as  $H$-module via $\Delta$, we have
$M \otimes(V\otimes V')\cong (M \otimes V)\otimes V'$.
\bigskip

The following lemma describes some special cases of tensor products of $B$-modules
with $H$-modules.

\begin{lem} \label{L:tensor}
Let $B$ be a right $H$-comodule algebra and let $A = B^{co\, H}$. Then:
\begin{enumerate}
\item
Suppose $B/A$ is $H$-Galois, with \eqref{E:Galois2} being bijective. Then,
for any $M$ be in $\Mod{B}$,
\[
M \otimes H \cong B \otimes_A M 
\]
as left $B$-modules.
\item 
Given $L$  in $\Mod{A}$ and $V$ in $\Mod{H}$, there is a $B$-module
isomorphism
\[
(B \otimes_A L)\otimes V \cong B \otimes_A (L \otimes V) \ ,
\]
where $L \otimes V$ is  viewed as $A$-module via $a(l\otimes v) = al \otimes v$.
\end{enumerate}
\end{lem}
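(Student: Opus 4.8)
The plan is to establish both isomorphisms by writing down explicit maps, using the Galois bijection \eqref{E:Galois2} for part (a) and the diagonal/trivial action comparison for part (b). For part (a), I would start from the bijective right $B$-module map $\rho' \colon B \otimes_A B \to B \otimes H$ of \eqref{E:Galois2} and observe that, since $M$ is a left $B$-module, tensoring on the right by $M$ over $B$ gives an isomorphism $(B \otimes_A B) \otimes_B M \to (B \otimes H) \otimes_B M$. The left-hand side simplifies to $B \otimes_A M$, and the right-hand side simplifies to $M \otimes H$ once we account for the $B$-module structure on $B \otimes H$; the key point is to check that the resulting left $B$-action on $M \otimes H$ coming from this identification is precisely the diagonal action $b(m \otimes v) = \sum b_0 m \otimes b_1 v$ defined in \ref{SS:tensor}. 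This is a direct computation with the definition of $\rho'$ and the comodule-algebra axioms, and it is the step most likely to require care with conventions (which tensor factor carries which action).

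For part (b), the plan is to produce a $B$-module isomorphism $(B \otimes_A L) \otimes V \to B \otimes_A (L \otimes V)$ directly. The natural candidate sends $(b \otimes_A l) \otimes v \mapsto \sum b_0 \otimes_A (l \otimes \sS(b_1)v)$ — essentially ``untwisting'' the diagonal $H$-action against the comodule structure — with inverse $b \otimes_A (l \otimes v) \mapsto \sum (b_0 \otimes_A l) \otimes b_1 v$. First I would check that the forward map is well-defined over $\otimes_A$, using that $A = B^{co\,H}$ so that $\rho(a) = a \otimes 1$ kills the twisting for $a \in A$. Then I would verify that the two maps are mutually inverse, which reduces to the antipode identities $\sum \sS(b_1) b_2 = \e(b)1$ and $\sum b_1 \sS(b_2) = \e(b)1$ applied inside the comodule structure, together with the associativity/functoriality remarks of \ref{SS:tensor}. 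Finally I would check left $B$-linearity of the forward map against the diagonal action on the source and the action $a(l \otimes v) = al \otimes v$ (extended by $b(b' \otimes_A (l\otimes v)) = bb' \otimes_A(l\otimes v)$) on the target.

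The main obstacle I anticipate is bookkeeping rather than any genuine difficulty: one must be scrupulous about which side the $H$-comodule structure acts on, and about the fact that in part (b) the map involves the antipode $\sS$, so well-definedness over $\otimes_A$ and the inverse computation both hinge on the coinvariance condition and the antipode axioms interacting correctly with Sweedler notation summations. A secondary point worth isolating as its own short lemma or remark is that the formula $b \otimes_A (l \otimes v) \mapsto \sum(b_0 \otimes_A l) \otimes b_1 v$ is well-defined on $B \otimes_A (L \otimes V)$ — i.e.\ balanced over $A$ — which again uses $\rho(a) = a \otimes 1$ for $a \in A$. Once these well-definedness checks are in place, linearity and the inverse identity are routine Sweedler-notation manipulations that I would present compactly. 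Alternatively, for part (b) one can avoid explicit formulas by observing that both sides represent the same functor on $\Mod{B}$: applying $B\otimes_A(-)$ is left adjoint to restriction, and the diagonal $H$-action is designed precisely so that $(B\otimes_A -)\otimes V$ and $B\otimes_A(-\otimes V)$ agree after composing with the forgetful functor; but I would still record the explicit isomorphism since it is needed for later naturality arguments.
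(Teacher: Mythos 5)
Your proposal is correct and follows essentially the same route as the paper's proof: part (a) by observing that the Galois map $\rho'$ of \eqref{E:Galois2} is a $(B,B)$-bimodule map and applying $-\otimes_B M$, and part (b) by the very same pair of explicit mutually inverse maps (the paper's $\gamma$ and $\delta$), with the same $A$-balancedness checks via $\rho(a)=a\otimes 1$ and the same antipode identities. The only cosmetic difference is that the paper obtains the well-definedness and $B$-linearity of $\gamma$ for free from the universal property of $B\otimes_A(-)$ rather than by direct computation.
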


\begin{proof}
(a)
Note that $\rho'$ in \eqref{E:Galois2} is actually a $(B,B)$-bimodule map,
where the $(B,B)$-bimodule structure on $B \otimes_A B$ is given by 
${}_BB \otimes_A B_B$ and $B$ acts diagonally on $B \otimes H$ from the left
and via the right regular action on the factor $B$ from the right.
Therefore, we have left $B$-module isomorphisms
\[
B \otimes_A M  \cong (B \otimes_A B) \otimes_B M \cong (B \otimes H) \otimes_B M
\cong M \otimes H
\]
sending $b \otimes m \mapsto \sum b_0m \otimes b_1$\,.

(b)
First construct a map 
$\gamma \colon B \otimes_A (L \otimes V) \to (B \otimes_A L)\otimes V$
as follows. The canonical map $\phi \colon L \to B \otimes_A L$, 
$l \mapsto 1\otimes_A l$, gives rise to a map of $A$-modules,
$\psi =\phi \otimes 1_V \colon L \otimes V \to (B \otimes_A L)\otimes V$. 
Since 
$(B \otimes_A L)\otimes V$ is in fact a $B$-module, with
the diagonal $B$-action, $\psi$ in
turn yields a map of $B$-modules
\begin{alignat}{3} \label{E:tensor1}
\gamma \colon B \otimes_A &(L \otimes V) &\quad &\tto
&\quad &(B \otimes_A L)\otimes V\\
b \otimes_A &(l\otimes v) &&\longmapsto & &{\textstyle\sum} (b_0\otimes_A l)\otimes b_1v \notag
\end{alignat}
For the inverse map, we define
\begin{alignat}{3}  \label{E:tensor2}
\delta \colon (B \otimes_A L)&\otimes V &\quad &\longrightarrow &\quad &B \otimes_A (L \otimes V)  \\
(b \otimes_A l)&\otimes v &&\longmapsto &&{\textstyle\sum} b_0 \otimes_A (l \otimes \sS^{}(b_1)v) \notag
\end{alignat}
To see that this map is well-defined note that the formula is linear in $l$, $v$
and $b$. Moreover, the map is $\k$-balanced, that is,
$\delta((b \otimes_A l)k \otimes v)=\delta((b \otimes_A l)\otimes kv)$
holds for all $k\in \k$. To verify $A$-balancedness, we use the formula
$a\otimes 1=\sum a_0\otimes a_1$. With this, we calculate for $a\in A$
\[
\begin{split}
\delta((ba\otimes_A l) \otimes v) &= \sum b_0a_0 \otimes_A(l\otimes \sS^{}(b_1a_1)v)\\
&= \sum b_0a \otimes_A (l\otimes \sS^{}(b_1)v)
\end{split}
\]
and
\[
\begin{split}
\delta((b \otimes_A al) \otimes v) 
&= \sum b_0 \otimes_A (al\otimes \sS^{}(b_1)v)\\
&= \sum b_0 \otimes_A a(l\otimes \sS^{}(b_1)v)\\
&= \sum b_0a \otimes_A (l\otimes \sS^{}(b_1)v)\ .
\end{split}
\]
It remains to check that $\gamma$ and $\delta$ are indeed inverse to
each other. We will carry out the verification of the identity
$\gamma\circ\delta = 1_{(B \otimes_A L)\otimes V }$; the check of
$\delta\circ\gamma = 1_{B \otimes_A (L \otimes V)}$ can be handled in
an entirely analogous fashion.
\[
\begin{split}
(\gamma\circ\delta)((b \otimes_A l) \otimes v) 
&= \gamma\left( \sum b_0 \otimes_A (l\otimes \sS^{}(b_1)v) \right)\\
&= \sum (b_0 \otimes_A l)\otimes b_1\sS^{}(b_2)v\\
&= \sum (b_0 \otimes_A l)\otimes \langle \e,b_1\rangle v\\
&= (b \otimes_A l) \otimes v\ .
\end{split}
\]
as required. This completes the proof of the lemma.
\end{proof}

\begin{prop} \label{P:tensor}
Let $B$ be a right $H$-comodule algebra.
Given $M$ be in $\Mod{B}$ and $V$ in $\Mod{H}$\,, view 
$M\otimes V$ as $B$-module with the diagonal $B$-action\,.
\begin{enumerate}
\item 
If $M$ is finitely generated and $V$ is finitely generated over $\k$ then 
$M \otimes V$ is finitely generated. 
\item
If $M$ is projective and $V$ is projective over $\k$ then 
$M \otimes V$ is projective.
\end{enumerate}
\end{prop}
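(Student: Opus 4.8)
The plan is to prove Proposition~\ref{P:tensor} by reducing both statements to the corresponding well-known facts about tensor products over $\k$, using the key observation that, as a $\k$-module, $M \otimes V$ is just the ordinary tensor product, while the diagonal $B$-action can be described via the comodule structure. The crucial structural input is Lemma~\ref{L:tensor}(b): for $L$ in $\Mod{A}$ and $V$ in $\Mod{H}$ we have $(B \otimes_A L) \otimes V \cong B \otimes_A (L \otimes V)$ as $B$-modules, where $L \otimes V$ carries the $A$-action only through the first factor. So the first thing I would do is record the obvious reductions: both ``finitely generated'' and ``projective'' are preserved under finite direct sums and under $B$-module direct summands, so it suffices to treat the case of a free module and then pass to summands.

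For part (a): if $M$ is finitely generated over $B$, choose a surjection $B^n \onto M$ of $B$-modules; tensoring with $V$ over $\k$ gives a surjection $B^n \otimes V \onto M \otimes V$, which is a map of $B$-modules by the functoriality noted in \ref{SSS:tensor}. So it suffices to show $B \otimes V$ is finitely generated over $B$ when $V$ is finitely generated over $\k$. Here I would apply Lemma~\ref{L:tensor}(b) with $L = A$ (so $B \otimes_A L = B$): it gives a $B$-module isomorphism $B \otimes V = (B \otimes_A A) \otimes V \cong B \otimes_A (A \otimes V)$. Now $A \otimes V$ is finitely generated over $\k$, hence \emph{a fortiori} finitely generated over $A$, say generated by $m$ elements; then $B \otimes_A (A \otimes V)$ is a quotient of $B^m$, hence finitely generated over $B$. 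Chaining the surjections proves (a).

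For part (b): the argument is parallel but with ``free/projective'' in place of ``generated.'' If $M$ is projective over $B$ it is a direct summand of a free module $B^{(I)}$, and $M \otimes V$ is then a $B$-module direct summand of $B^{(I)} \otimes V \cong (B \otimes V)^{(I)}$, so it suffices to show $B \otimes V$ is projective over $B$ when $V$ is projective over $\k$. If $V$ is a $\k$-module direct summand of $\k^{(J)}$, then $B \otimes V$ is a $B$-module direct summand of $B \otimes \k^{(J)} \cong B^{(J)}$ — but one must check that the $B$-module structure on $B \otimes \k^{(J)}$, with the diagonal action and $\k^{(J)}$ viewed as an $H$-module via the (trivial-augmentation, i.e.\ $\e$-twisted) action, really is the free module $B^{(J)}$; more cleanly, again invoke Lemma~\ref{L:tensor}(b) with $L = A$ to get $B \otimes V \cong B \otimes_A (A \otimes V)$, and since $A \otimes V$ is projective over $\k$ hence projective over $A$ (as $A$ is a $\k$-algebra and a projective $\k$-module is a summand of a free $\k$-module, which is a summand of a free $A$-module), the extension $B \otimes_A (A \otimes V)$ is projective over $B$. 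The direct summand observation then finishes (b).

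The main obstacle is purely a matter of keeping the module structures straight: in Lemma~\ref{L:tensor}(b) the $A$-action on $L \otimes V$ is only on the $L$-slot, so taking $L = A$ is legitimate and gives $A \otimes V$ with the ``first-factor'' $A$-action, which is exactly the structure for which ``projective over $\k$ $\Rightarrow$ projective over $A$'' is transparent; I expect the temptation will be to try to handle the diagonal $B$-action on $B \otimes V$ directly and to get tangled in the twisting by $\sS$ and $\e$, whereas routing everything through Lemma~\ref{L:tensor}(b) sidesteps that entirely. A small point worth noting explicitly is that $\Mod{B}$ here means left $B$-modules and the iso in Lemma~\ref{L:tensor}(b) is one of left $B$-modules, so no bijectivity of the antipode or $H$-Galois hypothesis is needed for this proposition — only the comodule-algebra structure.
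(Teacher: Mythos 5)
Your argument is correct and follows essentially the same route as the paper: reduce to $M = B$ via a surjection from $B^n\otimes V$ (for (a)) and via direct summands of free modules (for (b)), and then use Lemma~\ref{L:tensor}(b) with $L=A$ to untwist the diagonal action on $B\otimes V$. The only cosmetic difference is that you pass through the $A$-module $A\otimes V$ and induce up to $B$, whereas the paper simply observes that $B\otimes V$ with $B$ acting on the left factor is finitely generated, respectively projective, when $V$ is so over $\k$; the substance is identical.
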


\begin{proof}
(a)
Since $M$ is
generated, $M \otimes V$ is a homomorphic image of $B^n\otimes V \cong (B\otimes V)^{n}$
for some $n$. By Lemma \ref{L:tensor}(b) with $L = A$, the $B$-module
$B\otimes V$ with the diagonal $B$-action is isomorphic to $B\otimes V$ with $B$ just acting on the
left factor. Since $V$ is assumed finite over $\k$, it follows that
$B\otimes V$  is finitely generated as $B$-module, and hence so is
$M \otimes V$.

(a)
Since $\,.\, \otimes V$ commutes with direct sums, it suffices to consider the special case 
$M = B$\,. As above,
Lemma \ref{L:tensor}(b) with $L=A$ implies that $B\otimes V$ is projective as $B$-module.
\end{proof}


\subsection{Module structure on Grothendieck groups}  \label{SS:Grothendieck}

Let $B$ be a right $H$-comodule algebra.

\subsubsection{}
The Grothendieck group $G_0^\k(H) = K_0(\fgmod{H}_\k)$ is a ring with multiplication $[V][V'] = [V \otimes V']$,
where $V \otimes V'$ is an $H$-module via $\Delta$. 
By Proposition~\ref{P:tensor}, we may view $K_0(B) = K_0(\proj{B})$ as right module over 
$G_0^\k(H)$:
\begin{equation} \label{E:Grothendieck}
K_0(B) \otimes_\ZZ G_0^\k(H) \tto K_0(B) \ ,\qquad [M] \otimes [V] \mapsto [M \otimes V]\ .
\end{equation}

\subsubsection{}
Similar remarks hold for the Grothendieck group $G_0(B):= K_0(\FPmod{B})$, where 
$\FPmod{B}$ is the full subcategory of $\Mod{B}$ consisting of all left $B$-modules 
of \emph{type FP}$_\infty$, that is, modules $M$ that
have a projective resolution
\[
\dots \to P_n \to P_{n-1} \to \dots \to P_1 \to P_0 \to M \to 0
\]
where all $P_i$ are finitely generated projective $B$-modules. 
If the algebra $B$ is left noetherian then
$\FPmod{B} = \fgmod{B}$, the category of all finitely generated left $B$-modules. 
In general, it is not hard to see that, for any short exact
exact sequence $0 \to M' \to M \to M'' \to 0$ in $\Mod{B}$ , if two of $\{ M',M,M''\}$ belong to
$\FPmod{B}$ then all three do.
Proposition~\ref{P:tensor} implies that, for any $M$ in $\FPmod{B}$ and any $V$ in 
$\fgmod{H}_\k$, the tensor product $M \otimes V$ belongs to $\FPmod{B}$. Therefore,
we also have a $G_0^\k(H)$-module structure on $G_0(B)$:
\begin{equation} \label{E:Grothendieck2}
G_0(B) \otimes_\ZZ G_0^\k(H) \tto G_0(B) \ ,\qquad [M] \otimes [V] \mapsto [M \otimes V]\ .
\end{equation}
The Cartan map $c \colon K_0(B) \to G_0(B)$, coming from the inclusion
$\proj{B} \into \FPmod{B}$, is clearly a $G_0^\k(H)$-module map.

\subsubsection{} \label{SSS:fgproj}
We now turn to the special case where the Hopf algebra $H$ is finitely generated 
projective over $\k$ and $B/A$ is a right $H$-Galois extension (so $A = B^{co\, H}$).
As we remarked in Section~\ref{SS:Galois}, the algebra $B$ is then 
finitely generated projective as left and right $A$-module. Therefore, we have 
well-defined restriction and induction maps
\[
\begin{aligned}
\Res_A^B &\colon K_0(B) \to K_0(A)\ , \qquad & [P] &\mapsto [{}_AP] \\
\Ind_A^B &\colon K_0(A) \to K_0(B)\ ,  & [Q] &\mapsto [B \otimes_A Q]
\end{aligned}
\]
and similarly for $G_0$. Lemma~\ref{L:tensor}(a) has the following 
immediate consequence.

\begin{lem} \label{L:Grothendieck}
Assume that $H$ is finitely generated 
projective over $\k$\,.
Then, for any right $H$-Galois extension $B/A$, 
the map $\Ind_A^B\circ\Res_A^B \colon G_0(B) \to G_0(B)$ 
is given by the action of $[H] \in G_0^\k(H)$ on $G_0(B)$:
\[
(\Ind_A^B\circ\Res_A^B)[M] = [M][H]\ .
\]
Similarly for $K_0(B)$.
\end{lem}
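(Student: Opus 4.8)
The plan is to reduce the whole statement to a single application of Lemma~\ref{L:tensor}(a). First I would check that the composite $\Ind_A^B\circ\Res_A^B$ actually makes sense on $G_0(B)$ and on $K_0(B)$. Since $H$ is finitely generated projective over $\k$, the antipode $\sS$ is bijective and, as recalled in Section~\ref{SS:Galois}, $B$ is finitely generated projective both as a left and as a right $A$-module. Hence the restriction of a finitely generated projective $B$-module along $A\into B$ is again finitely generated projective over $A$, so $\Res_A^B$ is defined on $K_0$; and since restriction is exact and carries a resolution of a module of type~FP$_\infty$ by finitely generated projective $B$-modules to such a resolution over $A$, it is defined on $G_0$ as well. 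Dually, $B\otimes_A(\,.\,)$ is exact (as $B_A$ is projective, hence flat) and sends finitely generated projective $A$-modules to finitely generated projective $B$-modules, so $\Ind_A^B$ is defined on both $K_0$ and $G_0$.

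Next, both sides of the asserted identity are additive in $M$ over short exact sequences, so it suffices to verify the formula on the generators $[M]$ of $G_0(B)$, respectively $K_0(B)$, for $M$ an actual module in $\FPmod{B}$, respectively $\proj{B}$. For such an $M$ we have, by definition of $\Res$ and $\Ind$, that $(\Ind_A^B\circ\Res_A^B)[M]=[B\otimes_A M]$, where $M$ is regarded as an $A$-module by restriction. Because $H$ is finitely generated projective over $\k$ the antipode is bijective, so the conditions ``${}'\!\!\rho$ bijective'' and ``$\rho'$ bijective'' are equivalent; thus the hypothesis that $B/A$ is right $H$-Galois gives that $\rho'$ in \eqref{E:Galois2} is bijective, and Lemma~\ref{L:tensor}(a) applies, yielding an isomorphism of left $B$-modules $B\otimes_A M\cong M\otimes H$ with $M\otimes H$ carrying the diagonal $B$-action. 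Hence $[B\otimes_A M]=[M\otimes H]$ in $G_0(B)$ (resp. in $K_0(B)$).

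Finally, since $H$ itself is finitely generated projective over $\k$, its class $[H]$ lies in $G_0^\k(H)$, and $[M\otimes H]=[M][H]$ is precisely the action \eqref{E:Grothendieck2} of $[H]$ on $[M]\in G_0(B)$ (resp. the action \eqref{E:Grothendieck} on $K_0(B)$). This proves the identity on generators, and therefore on all of $G_0(B)$ and $K_0(B)$. I do not expect a genuine obstacle here: the real content is entirely encapsulated in Lemma~\ref{L:tensor}(a), and the only points that require any care are the bookkeeping that $\Res_A^B$ and $\Ind_A^B$ preserve the categories $\proj{\,.\,}$ and $\FPmod{\,.\,}$, and the observation that the diagonal-action tensor product computing $\Ind_A^B\circ\Res_A^B$ is exactly the module operation set up in \eqref{E:Grothendieck}/\eqref{E:Grothendieck2}.
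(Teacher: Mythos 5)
Your proof is correct and follows exactly the route the paper intends: the lemma is stated there as an immediate consequence of Lemma~\ref{L:tensor}(a), with $[B\otimes_A M]=[M\otimes H]=[M][H]$, and your additional bookkeeping (that $\Res_A^B$ and $\Ind_A^B$ preserve $\proj{\,.\,}$ and $\FPmod{\,.\,}$ because $B$ is finitely generated projective over $A$ on both sides) is just the detail the paper leaves implicit.
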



\subsection{The product formula}  \label{SS:product}

For a given right $H$-comodule algebra $B$, we now describe how
the Hattori-Stallings map $r_B \colon K_0(B) \to T(B) = B/[B,B]$ of Section~\ref{SS:Hattori}
behaves with respect to the module action in Section~\ref{SS:Grothendieck}. 
The lemma below generalizes \cite[Prop.~5.5(c)]{hB76} which treats the case where 
$B= H = \k G$ is the group algebra of a finite group $G$.

View $B$ as a left $H^*$-module as in
\cite[1.6.4]{sM93}:
\begin{equation} \label{E:comodule}
f\cdot b = \sum b_0\langle f, b_1 \rangle \qquad (f \in H^*, b \in B)\ .
\end{equation}
The submodule
$[B,B]$ that is spanned by the Lie commutators in $B$ is stable under
the action of the subalgebra $T(H)^* \subseteq H^*$ consisting of all trace forms on $H$:
$f \cdot [b,b'] = \sum [b_0,b'_0]\langle f, b_1b'_1\rangle$ holds for all $f \in T(H)^*$
and $b,b' \in B$. Therefore, $T(B) = B/[B,B]$ becomes a left $T(H)^*$-module
via \eqref{E:comodule}.
It will be convenient to let $H^*$ act from the right on $B$ by defining
\begin{equation} \label{E:comodule2}
b \cdot f:= \sS^*(f)\cdot b = \sum b_0\langle f, \sS(b_1) \rangle \qquad (f \in H^*, b \in B)\ .
\end{equation}
Since $T(H)^*$ is stable under the antipode $\sS^*$ of $H^*$, the group $T(B)$
becomes a right $T(H)^*$-module in this way.

\begin{lem} \label{L:product}
Let $B$ be a right $H$-comodule algebra.
Then, for any $M$ in $\proj{B}$ and $V$ in $\fgmod{H}_\k$, we have
 $r_B(M \otimes V) = r_B(M) \cdot \chi_{V}$\,.
 Thus the following diagram commutes.
 \[
\xymatrix{%
K_0(B) \otimes_\ZZ G_0^\k(H) \ar[r]^-{\eqref{E:Grothendieck}} \ar[d]_{r_B \otimes \chi} & K_0(B) \ar[d]^{r_B}\\
T(B) \otimes_\ZZ T(H)^* \ar[r]_-{\eqref{E:comodule2}} & T(B)
} 
\]
\end{lem}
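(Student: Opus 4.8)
The statement is additive in $M$ (the Hattori--Stallings rank, the character $\chi_V$, and the right $T(H)^*$-action on $T(B)$ are all additive in the relevant variable), so it suffices to check the formula $r_B(M\otimes V)=r_B(M)\cdot\chi_V$ on a set of generators of $K_0(B)$. Since every $M$ in $\proj{B}$ is a direct summand of a free module $B^n$, and since $\,.\,\otimes V$ commutes with direct sums and finite direct sums of copies of $B$, it is enough to treat the case $M=B$ (the free rank-one module) and then track how the idempotent cutting out $M$ interacts with the computation. More precisely, write $M\cong B^ne$ for an idempotent $e=(e_{ij})\in\Mn(B)$; then $M\otimes V\cong (B\otimes V)^n\bar e$ where $\bar e$ is the image of $e$ under the diagonal-action identification of Lemma~\ref{L:tensor}(b) (with $L=A$), which says $B\otimes V$ with the diagonal action is isomorphic to $B\otimes V$ with $B$ acting on the left tensor factor only.

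\textbf{Key steps.} First I would fix a $\k$-basis argument for $V$: since $V$ is in $\fgmod{H}_\k$, pick dual bases $\{v_j\}$, $\{v_j^*\}$ exhibiting $V$ as a $\k$-module summand of $\k^d$, so that $\chi_V(h)=\sum_j\langle v_j^*,hv_j\rangle$ for $h\in H$, and in particular $\chi_V=\sum_j$ (matrix entry $H\to\k$, $h\mapsto$ coefficient of $v_j$ in $hv_j$) as an element of $T(H)^*$. Second, compute $r_B(M\otimes V)$ directly from \eqref{E:Hattori}: using the isomorphism $M\otimes V\cong(B\otimes V)^n\bar e$, a set of dual bases for $M\otimes V$ over $B$ is obtained from the standard dual bases of $(B\otimes V)^n$ composed with $\bar e$, together with the $\k$-dual bases of $V$. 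The Hattori--Stallings rank $r_B(M\otimes V)=\tr_{(M\otimes V)/B}(1)$ then unwinds to a double sum $\sum_i\sum_j T_B\bigl(\text{($i$-th component of }e_{ii}\text{ paired through the }v_j\text{-coordinate)}\bigr)$. Third, and this is the crux, I would identify this double sum with $r_B(M)\cdot\chi_V$ as defined by \eqref{E:comodule2}: the point is that the diagonal $B$-action on $B\otimes V$ sends $b\otimes v\mapsto\sum b_0\otimes b_1v$, so when one extracts the $B$-module matrix entry of the identity on $M\otimes V$ one picks up exactly a factor $\langle v_j^*, b_1 v_j\rangle$ attached to $b_0$; summing over $j$ replaces this by $\chi_V$ evaluated on $b_1$, i.e.\ produces $\sum b_0\langle\chi_V,b_1\rangle$. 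Comparing with \eqref{E:comodule2}, which defines $b\cdot f=\sum b_0\langle f,\sS(b_1)\rangle$, I would need to account for the antipode; this enters because the correct dual-basis/trace computation for the \emph{diagonal} action (as opposed to the left-factor-only action) forces an application of $\sS$ — concretely, the isomorphism $\delta$ in Lemma~\ref{L:tensor}(b) involves $\sS(b_1)$, and it is through $\delta$ (equivalently its inverse $\gamma$) that one transports the standard dual bases of $B\otimes V$ (left action) to dual bases adapted to the diagonal action.

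\textbf{Main obstacle.} I expect the bookkeeping in the third step to be the real work: one must choose the transported dual bases for $M\otimes V$ over $B$ carefully and verify that plugging them into \eqref{E:Hattori} yields precisely $\sum b_0\langle\chi_V,\sS(b_1)\rangle$ with $b_0\otimes b_1=\rho(e_{ii}$-entry$)$, rather than a variant with $\sS$ misplaced or absent. A clean way to manage this is to first prove the $M=B$ case as a standalone computation — here $M\otimes V\cong B\otimes V$, dual bases over $B$ are $(1\otimes v_j)$ with functionals built from $v_j^*$ and the $B$-coordinate, and $r_B(B\otimes V)=\sum_j T_B$ of the diagonal matrix entry $=T_B\bigl(\sum\langle v_j^*,\text{(}1_B\text{)}_1 v_j\rangle(1_B)_0\bigr)$; since $\rho(1)=1\otimes 1$ this collapses to $\sum_j\langle v_j^*,v_j\rangle\,T_B(1)=\dim_\k(V)\,T_B(1)=\chi_V(1)T_B(1)$, which is exactly $r_B(B)\cdot\chi_V=T(1)\cdot\chi_V$ by \eqref{E:comodule2} evaluated using $\rho(1)=1\otimes 1$ and $\langle\chi_V,\sS(1)\rangle=\langle\chi_V,1\rangle=\dim_\k V$. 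Then the general case follows by replacing $1_B$ with the idempotent entries $e_{ii}$ throughout and invoking naturality of $\rho$ together with $T(B)$-linearity; the commutativity of the displayed square is then just a restatement of the formula together with the definitions \eqref{E:Grothendieck} and \eqref{E:comodule2} of the two horizontal maps and the additivity already noted. The stability statements for $[B,B]$ under $T(H)^*$ recorded just before the lemma are what make the target map $T(B)\otimes T(H)^*\to T(B)$ well-defined, so those are used implicitly but need no further proof here.
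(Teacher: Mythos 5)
Your proposal is correct and follows essentially the same route as the paper: present $M$ as the image of an idempotent on a free module, transport it through the isomorphism $\delta$ of Lemma~\ref{L:tensor}(b) (with $L=A$), and read off the Hattori--Stallings rank from the resulting matrix entries using $\k$-dual bases of $V$, with the antipode in \eqref{E:comodule2} arising exactly from $\delta$ as you identify. The only caution is your intermediate guess of a factor $\langle v_j^*,b_1v_j\rangle$ without $\sS$, which you then correctly repair; carrying out the idempotent computation explicitly (as the paper does) removes that ambiguity.
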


\begin{proof}
Write $M = e(F)$ with $F = B^r$ free over $B$ and $e = e^2 \in \End_B(F)$. Then 
$M \otimes V$ is the image of $e \otimes 1_V \in \End_B(F\otimes V)$; see Section~\ref{SSS:tensor}.
By Lemma~\ref{L:tensor},
\[
F \otimes V = (B \otimes_A A^r) \otimes V \underset{\delta}{\iso} B \otimes_A (A^r \otimes V)
\]
with $\delta$ as in \eqref{E:tensor2}. Under this isomorphism, 
$e \otimes 1_V \in \End_B(F\otimes V)$ is transformed into 
$e' = \delta \circ (e \otimes 1_V) \circ \gamma \in \End_B(B \otimes_A (A^r \otimes V))$
with $\gamma = \delta^{-1}$ as in \eqref{E:tensor1}.
Fix an $A$-basis $\{ x_i \}_1^r$ of $A^r$ and write $e(1 \otimes_A x_i) = \sum_j e^{i,j} \otimes_A x_j$
with $e^{i,j} \in B$. Then
\[
r_B(M) = \sum_i T(e^{i,i}) \ .
\]
Let $\{v_l,f_l \} \subseteq V \times V^*$ be dual bases for $V$ as $\k$-module; so $1_V = \sum_l v_l \otimes f_l$
under the standard identification $\End_\k(V) = V \otimes V^*$. Then
\[
\begin{split}
e'(1\otimes_A (x_i \otimes v_l)) 
&= \left( \delta \circ (e \otimes 1_V) \circ \gamma \right)(1\otimes_A (x_i \otimes v_l)) \\
&\underset{\eqref{E:tensor1}}{=} \left( \delta \circ (e \otimes 1_V) \right)((1\otimes_A x_i )\otimes v_l) \\
&= \sum_j \delta(( e^{i,j} \otimes_A x_j )\otimes v_l) \\
&\underset{\eqref{E:tensor2}}{=}  \sum_j  e^{i,j}_0 \otimes_A (x_j \otimes \sS(e^{i,j}_1)v_l) \\
&= \sum_{j,k}  e^{i,j}_0 \otimes_A (x_j \otimes \langle f_k, \sS(e^{i,j}_1) v_l \rangle v_k) \\
&= \sum_{j,k}  e^{i,j}_0 \langle f_k, \sS(e^{i,j}_1) v_l \rangle \otimes_A (x_j \otimes v_k) \ .
\end{split}
\]
Therefore,
\[
r_B(M \otimes V) = \sum_{i,l} T(e^{i,i}_0) \langle f_l, \sS(e^{i,i}_1) v_l \rangle
= \sum_{i} T(e^{i,i}_0) \langle \chi_{V},\sS(e^{i,i}_1) \rangle
\underset{\eqref{E:comodule2}}{=} r_B(M) \cdot \chi_{V}\ .
\]
\end{proof}


\subsection{$H$-Galois extensions with $H$ Frobenius}  \label{SS:Frob}

In this section, we focus on right $H$-Galois extensions $B/A$ such that $H$ is 
involutory (i.e., $\sS^2 = 1$) and a
Frobenius algebra over $\k$. By \cite{bP71}, the Frobenius property 
is equivalent to $H$ being finitely generated 
projective over $\k$ and the $\k$-module 
\[
\Hint_H^r = \{ a \in H \mid ab = \langle \e,b \rangle a \}
\]
of right integrals of $H$ being free of rank $1$ over $\k$. 
If $H$ is finitely generated 
projective over $\k$ then $\Hint_H^l = \sS(\Hint_H^r)$, because $\sS$ is an anti-automorphism of $H$,
and $\Hint_H^r$ and $\Hint_H^l$ are finitely generated projective of
constant rank $1$ over $\k$ by \cite[Proposition 3]{bP71}.
In particular, if $\k$ has trivial Picard group then
$H$ is a Frobenius $\k$-algebra if and only if $H$ is finitely generated 
projective over $\k$. 

Define the ideal $\edim H$ of $\k$ by
\[
\edim H := \langle \e, \Hint_H^l \rangle = \langle \e, \Hint_H^r \rangle \ .
\]
For example, if $H = \k G$ is the group algebra of a finite group $G$ 
then $\Hint_H^l  = \Hint_H^r = \k \Lambda$ with  
$\Lambda = \sum_{g\in G} g$ as in \ref{SSS:Hopfalgebra}, and hence $\edim (\k G) = |G| 1$.
In general, by a classical result of Larson and Sweedler \cite{rLmS69} (see also \cite[Corollary 11]{mLxx}),
$H$ is separable over $\k$ if and only if $H$ is Frobenius over $\k$ with $\edim H = \k$\,.

\begin{prop} \label{P:Frob}
Let $B/A$ be a right $H$-Galois extension with $H$ involutory and Frobenius over $\k$
and let $\iota \colon A \into B$ denote the inclusion map. Then the composite
\[
T(\iota)\circ r_A \circ \Res^B_A \colon K_0(B) \to K_0(A) \to T(A) \to T(B)
\]
has image in 
$(\edim H) A \mod [B,B]$.
\end{prop}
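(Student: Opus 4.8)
The plan is to reduce, via functoriality of Hattori--Stallings ranks and the product formula of Lemma~\ref{L:product}, to an explicit computation of the character of the regular representation of $H$, and then to a statement about integrals. Since $H$ is Frobenius over $\k$ it is finitely generated projective over $\k$; hence $B$ is finitely generated projective as a left and a right $A$-module, the maps $\Res_A^B$ and $\Ind_A^B$ are defined, and the left regular module ${}_HH$ belongs to $\fgmod{H}_\k$. Applying the commutative square \eqref{E:Hattori2} to $\iota\colon A\to B$, for which $K_0(\iota)=\Ind_A^B$, gives $T(\iota)\circ r_A=r_B\circ\Ind_A^B$, so the composite in the statement equals $r_B\circ\Ind_A^B\circ\Res_A^B$. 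By Lemma~\ref{L:Grothendieck} this is the operator $[M]\mapsto r_B(M\otimes H)$ on $K_0(B)$, with $H={}_HH$; and by Lemma~\ref{L:product} it equals $[M]\mapsto r_B(M)\cdot\chi_H$, where $\chi_H\in T(H)^{*}$ is the character of ${}_HH$ and the right action is \eqref{E:comodule2}. So it suffices to prove $r_B(M)\cdot\chi_H\in(\edim H)A\bmod[B,B]$ for every $M$ in $\proj B$.

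Next I would compute $\chi_H$, regarding $H$ as a Frobenius algebra as in \ref{SSS:Hopfalgebra}: let $\Lambda$ be a free generator of the rank-one free $\k$-module $\Hint_H^r$, let $\lambda\in\Hint_{H^*}^l$ be the Frobenius homomorphism with $\langle\lambda,\Lambda\rangle=1$, and let $\tau_H(a)=\sum\Lambda_2\,a\,\sS(\Lambda_1)$ be the associated Higman trace. Since $\chi_H(b)=\tr_{H/\k}(L_b)=\tr_{H/\k}(L_b\circ R_1)$, Lemma~\ref{L:Higman}(a) for $H$ (with $a=1$) together with \eqref{E:Frobenius2} gives $\chi_H=\tau_H(1)\lambda$. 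Applying the anti-automorphism $\sS$ to $\tau_H(1)=\sum\Lambda_2\sS(\Lambda_1)$ and using $\sS^2=1$ and the antipode identity $\sum\Lambda_1\sS(\Lambda_2)=\langle\e,\Lambda\rangle 1_H$ yields $\sS(\tau_H(1))=\langle\e,\Lambda\rangle 1_H$, hence $\tau_H(1)=\langle\e,\Lambda\rangle 1_H$ and therefore
\[
\chi_H=\langle\e,\Lambda\rangle\,\lambda ,
\]
where $\langle\e,\Lambda\rangle$ generates the ideal $\edim H$ of $\k$. The involutory hypothesis enters decisively at this point.

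Finally, for $M$ in $\proj B$ choose $b\in B$ with $T(b)=r_B(M)$; computing the action \eqref{E:comodule2} of $\chi_H=\langle\e,\Lambda\rangle\lambda$ on $b$ gives $r_B(M)\cdot\chi_H=T\bigl(\langle\e,\Lambda\rangle\,(b\cdot\lambda)\bigr)$, where $b\cdot\lambda=\sum b_0\langle\lambda,\sS(b_1)\rangle=(\sS^{*}\lambda)\cdot b$ via \eqref{E:comodule} and $\sS^{*}\lambda\in\Hint_{H^*}^r$. It remains to show that $\langle\e,\Lambda\rangle\,(b\cdot\lambda)$ lies in $(\edim H)A$ modulo $[B,B]$; for this I would compute the coaction of $b\cdot\lambda$. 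Using coassociativity of $\rho$ and the identity $\Delta(\sS h)=\sum\sS(h_2)\otimes\sS(h_1)$, the computation reduces to a Hopf-algebra identity for the left integral $\lambda$ of the form $\sum h_1\langle\lambda,\sS(h_2)\rangle=\langle\lambda,\sS(h)\rangle\,g$ ($g\in H$ the distinguished grouplike element measuring the failure of $\lambda$ to be two-sided), whence $\rho(b\cdot\lambda)=(b\cdot\lambda)\otimes g$. One then invokes the relation $\langle\e,\Lambda\rangle\,(g-1)=0$ in $H$ — a standard consequence of the integral identities of $H$ together with $\sS^2=1$, which forces $g=1$ whenever $\langle\e,\Lambda\rangle$ is a non-zero-divisor — to see that $\langle\e,\Lambda\rangle\,(b\cdot\lambda)$ is $H$-coinvariant, hence lies in $A$, and so in $(\edim H)A$ modulo $[B,B]$. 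The main obstacle is exactly this last step: pinning down the precise form of $\rho(b\cdot\lambda)$ and establishing the relation $\langle\e,\Lambda\rangle(g-1)=0$ between the integral $\Lambda$ and the distinguished grouplike element is where $\sS^2=1$ (rather than merely the Frobenius property of $H$) is genuinely used.
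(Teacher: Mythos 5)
Your reduction to $r_B(M)\cdot\chi_H$ via \eqref{E:Hattori2}, Lemma~\ref{L:Grothendieck} and Lemma~\ref{L:product} is exactly the paper's first step, and your computation $\chi_H=\langle\e,\Lambda\rangle\,\lambda$ (from $\tau_H(1)=\sum\Lambda_2\sS(\Lambda_1)$ and $\sS^2=1$) is a correct derivation of the fact the paper cites as $\k\chi_H=(\edim H)\Hint_{H^*}^l$. The endgame, however, has two genuine gaps. First, the relation $\langle\e,\Lambda\rangle(g-1)=0$ is precisely the nontrivial point, and you do not prove it: calling it a ``standard consequence of the integral identities and $\sS^2=1$'' is not an argument, and the parenthetical ``it forces $g=1$ when $\langle\e,\Lambda\rangle$ is a non-zero-divisor'' is not one either --- over a general base ring the non-zero-divisor case is not the general case, and $g\neq1$ genuinely occurs for involutory $H$ (e.g.\ $H=u(\fg)^*$ for a non-unimodular restricted Lie algebra $\fg$ in characteristic $p$), so you cannot hope to prove $g=1$ outright. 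The relation is in fact true, but its proof is exactly the missing fact $\sS^*(\chi_H)=\chi_H$ that the paper imports from \cite{mLxx}: by the dual-bases trace formula in the proof of Lemma~\ref{L:Higman}(a) and the identity $\beta(\tau_H(h),1)=\beta(h,\tau_H(1))$ of Lemma~\ref{L:Higman}(b), one gets $\chi_H(\sS(h))=\tr_{H/\k}(L_{\sS(h)})=\tr_{H/\k}(\sS\circ R_h\circ\sS)=\tr_{H/\k}(R_h)=\lambda(\tau_H(h))=\langle\e,\Lambda\rangle\lambda(h)=\chi_H(h)$, i.e.\ $\langle\e,\Lambda\rangle\,\lambda\circ\sS=\langle\e,\Lambda\rangle\,\lambda$; combined with $g=\sum\lambda(\Lambda_1)\Lambda_2$ and $1=\sum\lambda(\sS(\Lambda_1))\Lambda_2$ (equation \eqref{E:dual} at $a=1$) this yields $\langle\e,\Lambda\rangle(g-1)=0$. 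Without some such argument your proof is incomplete at its decisive step.

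Second, even granting $\langle\e,\Lambda\rangle(g-1)=0$, your last inference fails: coinvariance of $\langle\e,\Lambda\rangle(b\cdot\lambda)$ gives only $\langle\e,\Lambda\rangle(b\cdot\lambda)\in A$, whereas the Proposition requires membership in $(\edim H)A=\langle\e,\Lambda\rangle A$ modulo $[B,B]$, and $\langle\e,\Lambda\rangle B\cap A$ need not lie in $\langle\e,\Lambda\rangle A$ (an element of $B\setminus A$ may be carried into $A$ by the scalar $\langle\e,\Lambda\rangle$ without the product being $\langle\e,\Lambda\rangle$ times an element of $A$); so ``lies in $A$, and so in $(\edim H)A$ modulo $[B,B]$'' is a non sequitur. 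The paper's route repairs both defects at once: with $\sS^*(\chi_H)=\chi_H$ in hand, take a representative $b$ of $r_B(M)$ and write $b\cdot\chi_H=\sS^*(\chi_H)\cdot b=\chi_H\cdot b=\langle\e,\Lambda\rangle\,(\lambda\cdot b)$ for the left action \eqref{E:comodule}; since $\lambda$ is a \emph{left} integral of $H^*$, one has $\lambda\cdot b\in B^{H^*}=A$ by \cite[1.7.2]{sM93}, so the element is exhibited as $\langle\e,\Lambda\rangle$ times an element of $A$, which is what is needed, and no distinguished grouplike element enters. (A minor point: with the convention $\sum\lambda(h_1)h_2=\lambda(h)g$, your coaction computation produces $\sS(g)=g^{-1}$ rather than $g$; this does not affect the discussion.)
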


\begin{proof}
For any $M$ in $\proj{B}$, we compute 
\[
\begin{split}
(T(\iota)\circ r_A \circ \Res^B_A)(M) 
&\stackrel{\phantom{\text{Lemma}~\ref{L:Grothendieck}}}{\underset{\eqref{E:Hattori2}}{=}} (r_B\circ K_0(\iota)\circ \Res^B_A)(M) \\
&\underset{\text{Lemma}~\ref{L:Grothendieck}}{=} r_B([M][H]) \\
&\underset{\text{Lemma}~\ref{L:product}}{=} r_B(M) \cdot \chi_{H} \ .
\end{split}
\]
Since   $H$ involutory and Frobenius over $\k$, we further know that 
\[
\sS^*(\chi_H) = \chi_H \quad \text{and} \quad
\k \chi_H = (\edim H)\Hint_{H^*}^l \ ;
\]
cf., e.g., \cite[(13) and Lemma 12(a)]{mLxx}.  Moreover, under the left $H^*$-action \eqref{E:comodule}
on $B$, we have $\Hint_{H^*}^l \cdot B \subseteq B^{H^*} = A$; see \cite[1.7.2]{sM93} for the latter equality.
Therefore, the element $r_B(M) \cdot \chi_{H}$ belongs
to $(\edim H)T(\iota)(T(A))$\,.
\end{proof}

Recall from Section~\ref{SSS:commutative} that if $A$ is a commutative ring without idempotents 
$\neq 0, 1$ then $H_0(A) = [\Spec A,\ZZ]$ consists of constant functions. In particular, $\rank Q$ is
constant for every $Q$ in $\proj{A}$; we will denote the constant value of this function by
$\rank_A Q \in \ZZ$.

\begin{thm} \label{T:Frob}
Let $B/A$ be a right $H$-Galois extension such that
$H$ is involutory and Frobenius over $\k$\,. Assume further that
\begin{enumerate}
\renewcommand{\labelenumi}{(\roman{enumi})}
\item
$A$ is commutative without idempotents $\neq 0, 1$, and
\item
there exists 
$f \in T(B)^*$ with $f(1) = 1$\,.
\end{enumerate}
Then $\rank_A M \cdot 1 \in \edim H$ holds for each $M$ in $\proj{B}$.
\end{thm}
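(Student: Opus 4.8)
The plan is to apply Proposition~\ref{P:Frob} to the specific projective module $M$ and then use the hypothesis on $A$ together with the trace form $f$ to extract a numerical statement. First I would consider the restriction ${}_A M$ of $M$ to $A$. Since $A$ is commutative without nontrivial idempotents, by the remarks preceding the theorem ${}_A M$ has a well-defined constant rank $\rank_A M \in \ZZ$, and the Hattori-Stallings rank $r_A({}_AM) \in T(A) = A$ equals $(\rank_A M)\cdot 1_A$ by Section~\ref{SSS:commutative} (the rank map lands in constant functions, and these are sent to $(\rank_A M) \cdot 1$ in $A$). Thus Proposition~\ref{P:Frob} applied to $[M] \in K_0(B)$ tells us that the image of $(\rank_A M)\cdot 1_A$ in $T(B) = B/[B,B]$ lies in $(\edim H)A \bmod [B,B]$; equivalently, $(\rank_A M)\cdot 1_B \in (\edim H)A + [B,B]$ inside $B$.

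Next I would apply the trace form $f \in T(B)^*$ from hypothesis (ii). By definition $T(B)^* = \Hom_\k(T(B),\k)$, so $f$ vanishes on $[B,B]$ and is $\k$-linear on $B$. Applying $f$ to the membership just obtained, and using $f(1_B) = 1$, gives
\[
(\rank_A M)\cdot 1_\k = (\rank_A M)\cdot f(1_B) = f\bigl((\rank_A M)\cdot 1_B\bigr) \in f\bigl((\edim H)A\bigr) \subseteq (\edim H)\k = \edim H\ ,
\]
where the last inclusion holds because $\edim H$ is an ideal of $\k$ and $f$ is $\k$-linear, so $f((\edim H)A) \subseteq (\edim H)f(A) \subseteq \edim H$. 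This is exactly the assertion $\rank_A M \cdot 1 \in \edim H$.

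The argument is essentially a short chain of reductions, so there is no single hard computational step; the main thing to get right is the bookkeeping identifying $r_A({}_AM)$ with $(\rank_A M)\cdot 1_A$ under $T(A) = A$ for the idempotent-free commutative ring $A$, and checking that pushing the conclusion of Proposition~\ref{P:Frob} forward along $f$ behaves well with respect to the ideal $\edim H$ of $\k$. One subtlety worth a sentence is that Proposition~\ref{P:Frob} is stated for the composite $T(\iota)\circ r_A \circ \Res^B_A$ evaluated on $[M]$, which is precisely $T(\iota)\bigl(r_A({}_AM)\bigr)$, so no further manipulation of the Grothendieck class is needed — we simply feed in our chosen $M$ and read off the containment.
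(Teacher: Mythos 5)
Your proposal is correct and follows essentially the same route as the paper: apply Proposition~\ref{P:Frob} to $[M]$, identify $r_A({}_AM)$ with $\rank_A M\cdot 1_A$ via the no-nontrivial-idempotents hypothesis on $A$ (Section~\ref{SSS:commutative}), and then evaluate the trace form $f$ with $f(1)=1$, using that $f$ kills $[B,B]$ and is $\k$-linear so that the ideal $\edim H$ is preserved. The only difference is that you spell out the bookkeeping (e.g.\ $f((\edim H)A)\subseteq \edim H$) in more detail than the paper does, which is fine.
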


\begin{proof}
By Proposition~\ref{P:Frob}, $T(\iota)\circ r_A(M)$ belongs
to $(\edim H)T(B)$ and by hypothesis (i), we have $r_A(M) = \rank_A M \cdot 1 \in A$\,.
Applying the trace function $f \colon T(B) \to \k$ in (ii) to 
$\rank_A M \cdot 1\mod [B,B] \in (\edim H)T(B)$ yields the result.
\end{proof}

In the special case where $B=H$ is a finite-dimensional Hopf algebra over a field $\k$
and $A = \k$, hypothesis
(i) is trivially holds and we can take $f = \e$ 
in (ii). Theorem~\ref{T:Frob}
therefore implies \cite[Theorem 2.3(b)]{mL97a}: 
if $H$ is involutory and not semisimple then $\ch \k$ is positive and a
divisor of $\dim_\k M$ for every $M$ in $\proj{H}$.


\bibliographystyle{amsplain}
\bibliography{../bibliography}


\end{document}